\newtheorem{proposition}{Proposition}[section]
\newtheorem{corollary}{Corollary}[section]
\newtheorem{lemma}{Lemma}[section]
\newcommand{\Ric}{\text{Ric}}
\begin{document}

\title{The index of symmetry for a left-invariant metric on a solvable three-dimensional Lie group}

\author{Robert D. May}

\date{}

\maketitle

\textbf{Abstract:}  We find the index of symmetry for all solvable three-dimensional Lie groups with a left-invariant metric.  When combined with the work of Reggiani on unimodular three-dimensional Lie groups, the index of symmetry is then known for all three-dimensional Lie groups with a left-invariant metric.  Similar to the unimodular case, for every solvable three-dimensional Lie algebra there is at least one left-invariant metric for which the index of symmetry is positive.  Also, the index is never equal to 2. 
\\

\textbf{Keywords:}  index of symmetry, solvable Lie group, Killing fields, Ricci matrix 
\\

\textbf{2010 Mathematics Subject Classification:}  22E15, 22E25, 53C30

\section{Introduction}

The \textit{index of symmetry}, $i(G)$, for a Lie group $G$ with identity element $e$ and a left-invariant metric $g$ can be defined as follows: 
Define a vector field $X$ on $G$ to be \textit{parallel at the identit}y (or parallel at $e$) if $\nabla_Y(X)$ vanishes at $e$ for every vector field $Y$ (that is, $\nabla_Y(X)\mid_e = 0$). Then define a subspace $S_e \in \mathfrak{g}$ where $\mathfrak{g}$ is the Lie algebra for $G$ considered as the tangent space to $G$ at $e$:  $S_e$ consists of all vectors $x \in \mathfrak{g}$ such that $x = X\mid_e$ for a vector field $X$ on $G$ which is a Killing field for the metric $g$ and which is parallel at $e$.  Then the index of symmetry for $G$ is the dimension of $S_e$:  $i(G) = i(G,g) = \text{dim}(S_e)$.  

Reggiani in \cite{Reg} has computed this index of symmetry for all three-dimensional Lie groups with a left-invariant metric which are \textit{unimodular} (that is, whose Lie algebras are unimodular).  In this paper we compute the index of symmetry for all three-dimensional Lie groups with a left-invariant metric which are \textit{solvable} (that is, whose Lie algebras are solvable).  Since any three-dimensional Lie group is either unimodular or solvable (or both), the index of symmetry is determined for any three-dimensional Lie group with a left-invariant metric.

In section 2 we give a table listing  $i(G)$ for all cases where the index of symmetry is positive. We note that, analogous to the results of Reggiani \cite{Reg} for the unimodular Lie algebras, for every solvable three-dimensional Lie algebra there is at least one metric for which the index of symmetry is positive.  Also $i(G)$ is never equal to $2$.

In section 3 we review the classification of solvable metric Lie algebras as given by May and Wears in \cite{MW} in terms of standard bases and defining triples $(a,b,c)$.  We note which  underlying Lie algebras correspond to  which triples.

In section 4 we give matrix representations for the solvable metric Lie algebras and the associated simply-connected Lie groups $G$.  We find global coordinates $\{x_1,x_2,x_3\}$ for each such $G$ with associated global coordinate frame $\{\partial/\partial x_1,\partial/\partial x_2,\partial/\partial x_3\}$ for the tangent bundle of $G$.  We also find a global orthonormal frame consisting of left-invariant vector fields $L_i$ and a global frame of right-invariant vector fields $R_i$ (which are Killing fields for the given metric on $G$). Finally, we find the relations between these three frames for the tangent bundle.

In section 5 we review some properties of isometries of $G$ and the related Killing vector fields.  We obtain a condition for a vector field $X$ to be a Killing field in terms of the left-invariant frame vectors $L_i$. We define a matrix $P(X)$ for any vector field $X$ such that the vanishing of $P(X)$ is equivalent to the condition that $X$ is parallel at the identity.  We then compute the values of $P(X)$ when $X$ is one of the right invariant vector fields $R_i$.  Finally, we obtain a formula for $P(X)$ when $X$ is a Killing field vanishing at $e=(0,0,0)$. 

In section 6 we review the index of symmetry $i(G)$.  We then define a subspace $\bar{S}_e \subseteq S_e$ and a \textit{subindex} $\bar{i}(G) = \text{dim}(\bar{S}_e)$. 
$\bar{S}_e$ and $\bar{i}_e$ involve only the right-invariant Killing fields. We give a condition under which $i(G) = \bar{i}(G)$. Then using the formulas for $P(R_j)$ from section 5 we calculate the subindex $\bar{i}(G)$ for all cases. 

In section 7 we study certain restrictions on the contributions that Killing fields $X$ vanishing at $e$ can make to the index of symmetry. We define the Ricci matrix $R$ to be the Ricci tensor for $g$ at $e$ expressed in terms of the standard basis $\{e_i\}$. We  obtain a formula for $P(X)$ in terms of the one-parameter group of isometries fixing $e$ corresponding to $X$ and indicate how the structure of $R$ can influence $P(X)$.

In section 8 we define a Lie algebra (and associated Lie group $G$) to be \textit{generic} if the diagonal entries in the diagonalized Ricci matrix are all distinct.  Our main result is that for generic $G$ the subindex and index agree:  $i(G) = \bar{i}(G)$. Then using the results of section 6 we have the formula for $i(G)$ in all but the nongeneric cases.  We catalog all the nongeneric metric Lie algebras.

In sections 9, 10, and 11 we calculate $i(G)$ for the nongeneric metric Lie algebras.

In section 12 we sketch the computational proofs of some of our stated results. 

We use the summation convention throughout: unless specified otherwise, any repeated index implies summation over that index.  Also, we use $e$ for the identity in a Lie group $G$, so $e=(0,0,0)$ for coordinates centered at the identity.  We then write $X\lvert_e$ for the value of a vector field $X$ on $G$ at the identity $e$. So $X\lvert_e \in \mathfrak{g}$ when we identify the Lie algebra $\mathfrak{g}$ with the tangent space to $G$ at $e$.  
  
\section{Table of results.}

Table 1 below summarizes our results for the index of symmetry $i(G)$ for a simply-connected Lie group $G$ with a left-invariant metric in terms of the correponding metric Lie algebra $\mathfrak{g}$.  We list all cases for which the index of symmetry is positive.  In all other cases the index is $0$.  Notice that for every solvable algebra there is at least one metric for which the index of symmetry is positive.

\begin{table} [h]
	\begin{center}
		\begin{tabular}{|l|c|c|c|}
			\hline
			$\mathfrak{g}$  &  Triple $(a,b,c)$  &  $S_e$  &  $i(G)$  \\ \hline 
			$\mathfrak{g}(K)$, $K>1$  &  $(a,b,b)$, $a>0$
			& span$(e_1)$  &  1  \\ \hline 	 
			$\mathfrak{g}(1)$  &  $(a,b,b),\,a>0$  &  
			$\mathfrak{g}$  &  3  \\ \hline
			$\mathfrak{g}(1)$  &  $(a,b,c),\,a>0,\,b\neq c $  &  span$(\alpha_2e_2+\alpha_3 e_3)$  &  1  \\ \hline
			$\mathfrak{g}(K)$, $0<K<1$  &  $(a,b,b)$, $a>0$
			& span$(e_1)$  &  1  \\ \hline $\mathfrak{g}(K)$, $0<K<1$  &  $(a,a+a\sqrt{1-K},a-a\sqrt{1-K})$, $a>0$
			& span$(e_2-e_3)$  &  1  \\ \hline
			$\mathfrak{g}(0)$  &  $(a,2a,0)$, $a>0$
			& span$(e_2-e_3)$  &  1  \\ \hline
			$\mathfrak{g}(K)$, $K<0$  &  $(a,b, -b)$, $a>0$
			& $\mathfrak{g}$  &  3  \\ \hline
			$\mathfrak{g}(K)$, $K<0$  &  $(a,a+a\sqrt{1-K},a-a\sqrt{1-K})$, $a>0$
			& span$(e_2-e_3)$  &  1  \\ \hline 
			$\mathfrak{g}_I$  & $(a,0,0)$, $a>0$
			& $\mathfrak{g}$  &  3  \\ \hline
			$\mathfrak{e}(1,1)$ &  $(0,b,b)$, $b>0$
			& span$(e_1)$  &  1  \\ \hline
			Heisenberg algebra  &  $(0,b,0)$, $b>0$
			& span$(e_3)$  &  1  \\ \hline
			$\mathfrak{e}(2)$  &  $(0,b,-b)$, $b>0$
			& $\mathfrak{g}$  &  3  \\ \hline
			Abelian algebra   &  (0,0,0) & $\mathfrak{g}$  
			&  3  \\  \hline
		\end{tabular}
		\caption{Index of symmetry $i(G)$} 	
	\end{center}
\end{table}

 \section{Classification of solvable three-dimensional Lie algebras} 

Two metric Lie algebras $(\mathfrak{g}_1,Q_1) , (\mathfrak{g}_2,Q_2)$ are equivalent if there exists a Lie algebra isomorphism $f:\mathfrak{g}_1 \rightarrow \mathfrak{g}_2$ which also preserves the metric, $f^*(Q_2)=Q_1$. By the results of May and Wears \cite{MW}, for any three-dimensional solvable metric Lie algebra $(\mathfrak{g},Q)$, there exists an orthonormal basis ${\mathbf{e}} = \{ e_1 ,e_2 ,e_3 \} $ and a matrix $A = \left[ {\begin{array}{*{20}c}
	a & c  \\
	b & a  \\
	\end{array} } \right]$ with $a\geq0,b\geq\mid c\mid$, such that
$[e_1,e_2 ] = ae_2  + be_3$ and $[e_1,e_3 ] = ce_2  + ae_3$. We say that the metric Lie algebra $(\mathfrak{g},Q)$ defines the triple $(a,b,c)$.  Equivalence classes of solvable metric Lie algebras correspond one to one with such matrices $A$ (or such triples $(a,b,c)$).  These solvable metric Lie algebras $(\mathfrak{g},Q)$ correspond one to one with simply-connected three-dimensional solvable Lie groups $G$ with a left-invariant metric.

The underlying Lie algebra $\mathfrak{g}$ corresponding to a given matrix $A = \left[ {\begin{array}{*{20}c}
	a & c  \\
	b & a  \\
	\end{array} } \right]$ with $a\geq0,b\geq\mid c\mid$ can be described as follows: 

First, $\mathfrak{g}$ is unimodular if and only if $a = 0$.

If $a=0, \, b \geq c >0$, then $\mathfrak{g} = \mathfrak{e}(1,1)$, the Lie algebra for the Lie group $E(1,1)$ of rigid motions of Minkowski 2-space.  (The simply-connected Lie group $G$ is the universal cover of the connected component of the identity of $E(1,1)$.)

If $a=0, \, c<0$, then $\mathfrak{g} = \mathfrak{e}(2)$, the Lie algebra for the Lie group $E(2)$ of rigid motions of Euclidean 2-space.  (The simply-connected Lie group $G$ is the universal cover of the connected component of the identity of $E(2)$.)

If $a=0 , \, b>0, \, c=0$, then  $\mathfrak{g}$ is the Heisenberg algebra and $G$ is the associated Heisenberg group. 
 
If $a=b=c=0$, then the algebra is abelian.
  
Next, consider the non-unimodular cases where $a > 0\,,\,b \geqslant \left| c \right|$.  If $b=c=0$ , then $\mathfrak{g}$ is the special case of a (nonunimodular) Lie algebra with a basis $\{ b_1 ,b_2 ,b_3 \} $ for which $[b_1 ,b_2 ] = b_2 \,,\,[b_1 ,b_3 ] = b_3 \,,\,[b_2 ,b_3 ] = 0$.  We will denote this Lie algebra by $\mathfrak{g}_I $,  

For all other cases when $a > 0 , \, b > 0$, define 
\[K = K(\mathfrak{g},Q) = \frac{{bc}}{{a^2 }}.\]  

The following result was proved by May and Wears in \cite{MW} (and was observed by Milnor in \cite{Mil}).   
\begin{lemma}  For $i=1,2$, let $(\mathfrak{g}_i,Q_i)$ be non-unimodular metric Lie algebras defining triples $(a_i,b_i,c_i)$ with $a_i>0,b_i>0$.  Then $\mathfrak{g}_1$ and $\mathfrak{g}_2$ are isomorphic as Lie algebras if and only if $K(\mathfrak{g}_1,Q_1 ) = K(\mathfrak{g}_2 , Q_2)$.  \end{lemma}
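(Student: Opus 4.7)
The plan is to reduce the isomorphism question to classifying $2\times 2$ real matrices up to positive rescaling and conjugation. The key structural observation is that since $a_i>0$ and $[e_2,e_3]=0$, the derived subalgebra $\mathfrak{g}'_i=[\mathfrak{g}_i,\mathfrak{g}_i]$ coincides with $\text{span}(e_2,e_3)$, which is $2$-dimensional and abelian; moreover the operator $\text{ad}_{e_1}$ restricted to $\mathfrak{g}'_i$ has matrix $A_i=\begin{pmatrix} a_i & c_i \\ b_i & a_i \end{pmatrix}$ in the basis $\{e_2,e_3\}$.

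For the forward direction, suppose $f\colon \mathfrak{g}_1\to\mathfrak{g}_2$ is a Lie algebra isomorphism. Since $f$ must send the derived subalgebra to the derived subalgebra, I would write $f(e_1)=\lambda e_1'+v$ with $v\in\mathfrak{g}_2'$ and $\lambda\neq 0$, and let $P\in GL_2(\mathbb{R})$ be the matrix of $f|_{\mathfrak{g}_1'}$ in the bases $\{e_2,e_3\}$, $\{e_2',e_3'\}$. Because $\text{ad}_v$ annihilates $\mathfrak{g}_2'$, expanding $f([e_1,e_j])=[f(e_1),f(e_j)]$ for $j=2,3$ produces the matrix identity $P A_1=\lambda A_2 P$. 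Taking traces yields $2a_1=2\lambda a_2$, which forces $\lambda>0$; taking determinants yields $\det A_1=\lambda^2\det A_2$. Dividing, one obtains
\[
1-K_1 \;=\; \frac{\det A_1}{a_1^2} \;=\; \frac{\lambda^2\det A_2}{\lambda^2 a_2^2} \;=\; 1-K_2,
\]
so $K_1=K_2$.

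For the converse, suppose $K_1=K_2$ and set $\lambda=a_1/a_2>0$. Then $A_1$ and $\lambda A_2$ have the same trace $2a_1$ and the same determinant $a_1^2(1-K_1)$, hence the same characteristic polynomial. The $(2,1)$-entries $b_1$ and $\lambda b_2$ are both strictly positive, so neither matrix is a scalar multiple of the identity; for $2\times 2$ real matrices this suffices to conclude that they are conjugate over $\mathbb{R}$. Choosing $P\in GL_2(\mathbb{R})$ with $PA_1P^{-1}=\lambda A_2$, I would define $f$ by $f(e_1)=\lambda e_1'$ and $f(e_j)=\sum_k P_{kj}e_k'$ for $j=2,3$; a direct check of the three nonzero brackets confirms that $f$ is a Lie algebra isomorphism.

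The only real obstacle is the standard linear-algebra fact invoked in the converse: two $2\times 2$ real matrices with the same characteristic polynomial are conjugate provided neither is scalar. The hypotheses $a_i>0$ and $b_i>0$ rule out exactly this degeneracy, so everything else reduces to routine bookkeeping.
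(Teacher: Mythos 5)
The paper itself does not prove this lemma but quotes it from May--Wears \cite{MW} (cf.\ Milnor \cite{Mil}), so there is no internal argument to compare against; judged on its own, your reduction to rescaling-plus-conjugacy of the matrices $A_i$ is the natural one, and your converse direction is correct as written: with $\lambda=a_1/a_2>0$ the matrices $A_1$ and $\lambda A_2$ have equal trace and determinant, neither is scalar because $b_1>0$ and $\lambda b_2>0$, so both are similar to the common companion matrix, and the map determined by $\lambda$ and $P$ respects all three brackets.

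There is, however, a genuine (though repairable) gap in the forward direction: your structural claim that $\mathfrak{g}_i'=[\mathfrak{g}_i,\mathfrak{g}_i]$ equals $\mathrm{span}(e_2,e_3)$ and is two-dimensional is false in exactly one family of cases. Since $\mathfrak{g}_i'$ is the column space of $A_i$, it is two-dimensional precisely when $\det A_i=a_i^2-b_ic_i\neq 0$, i.e.\ when $K_i\neq 1$; for $K_i=1$ it is a line. In that case the matrix $P$ of ``$f|_{\mathfrak{g}_1'}$ in the bases $\{e_2,e_3\}$, $\{e_2',e_3'\}$'' is not defined, and the identity $PA_1=\lambda A_2P$ does not follow as written, because nothing in your argument yet guarantees that $f$ carries $\mathrm{span}(e_2,e_3)$ into $\mathrm{span}(e_2',e_3')$. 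Two easy repairs: (i) note that $\dim\mathfrak{g}_i'=2$ if and only if $K_i\neq 1$, so an isomorphism forces $K_1=1\Leftrightarrow K_2=1$; when both equal $1$ the forward implication is trivial, and when both differ from $1$ your argument runs verbatim; or (ii) replace the derived algebra by the nilradical, which does equal $\mathrm{span}(e_2,e_3)$ for every triple with $a_i>0$, $b_i>0$ (any ideal containing a vector with nonzero $e_1$-component contains $\mathfrak{g}_i'$, on which the adjoint action of that vector has a nonzero eigenvalue, so such an ideal cannot be nilpotent), and which is likewise preserved by any isomorphism. With either repair the rest of your computation (trace gives $\lambda=a_1/a_2>0$, determinant gives $1-K_1=1-K_2$) is correct and the lemma follows.
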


Thus for each real $K$ there is a Lie algebra $\mathfrak{g}(K)$ such that the metric Lie algebras $(\mathfrak{g},Q)$ with $\mathfrak{g}$ isomorphic to $\mathfrak{g}(K)$ are exactly those with $K(\mathfrak{g},Q) = K$. 
Then for metric Lie algebras defining triples $(a,b,c) $  with $a>0$ and $\frac{{bc}}{{a^2 }}= K$ the underlying Lie algebra is the (non-unimodular) algebra $\mathfrak{g}(K)$ when $K \ne 0$.  When $K=0$, the Lie algebra is still $\mathfrak{g}(0)$ as long as $b>0$ .  When the defined triple is $(a,0,0) , a>0 $, the underlying Lie algebra is the special Lie algebra $\mathfrak{g}_I $.

\section{Matrix representations and global coordinates}

In this section we consider matrix representations for the solvable Lie algebras and then construct associated simply-connected matrix Lie groups for these Lie algebras.  Let $A = \left[ {\begin{array}{*{20}c}   a & c  \\   b & d  \\
	\end{array} } \right]$ be any non-zero two-by-two real matrix and write $\mathfrak{g}\left( A \right)$ for the subset of $M_3 (\mathbb{R})$ consisting of matrices of the form
\[L(\mathbf{x}) = L(x_1,x_2,x_3) = \left[ {\begin{array}{*{20}c}
	{x_1 A} & {\begin{array}{*{20}c}   x_2  \\   x_3  \\
		\end{array} }  \\
	{\begin{array}{*{20}c} 
		0 & 0  \\
		\end{array} } & 0  \\
	\end{array} } \right].\]
Computing brackets in $M_3 (\mathbb{R})$ gives 
$ \left[ L(\mathbf{x}),L(\mathbf{y}) \right] = L(\mathbf{z})
$
where $z_1  = 0$ and
\[\left[ {\begin{array}{*{20}c}
	{z_2 }  \\
	{z_3 }  \\
	\end{array} } \right] = x_1 A\left[ {\begin{array}{*{20}c}
	{y_2 }  \\
	{y_3 }  \\
	\end{array} } \right] - y_1 A\left[ {\begin{array}{*{20}c}
	{x_2 }  \\
	{x_3 }  \\
	\end{array} } \right] = A\left[ {\begin{array}{*{20}c}
	{x_1 y_2  - y_1 x_2 }  \\
	{x_1 y_3  - y_1 x_3 }  \\
	
	\end{array} } \right].\]
Then $\mathfrak{g}\left( A \right)$ is a three dimensional Lie subalgebra of $M_3 \left( \mathbb{R} \right)$ with a basis 
\[e_1  = L(1,0,0), e_2  = L(0,1,0), e_3  = L(0,0,1). \]  Evidently the derived algebra $\mathfrak{g}(A)'$ is contained in the span of $e_2 $ and $e_3 $ and is therefore of dimension at most two.  Compute \[\begin{gathered}
\left[ {e_1 ,e_2 } \right] = L(0,a,b) = ae_2  + be_3  \hfill \\
\left[ {e_1 ,e_3 } \right] = L(0,c,d) = ce_2  + de_3  \hfill \\
\left[ {e_2 ,e_3 } \right] = L(0,0,0) = 0 \hfill \\ 
\end{gathered} \]

If we put a metric on $\mathfrak{g}(A)$ by declaring $\left\{ {e_1 ,e_2 ,e_3 } \right\}$ to be an orthonormal frame and then choose $A$ with $a = d \geqslant 0\,,\,b \geqslant \left| c \right|$, then $\mathfrak{g}\left( A \right)$ is a matrix representation for the solvable non-abelian metric Lie algebra corresponding to the triple $\left( {a,b,c} \right) $.

We now construct a (simply-connected) matrix Lie group $G(A)$ associated with $\mathfrak{g}(A)$.  Given the matrix $A$ and a real number $t$, define a two by two matrix $M(t) = M(A,t) = e^{At} $.  Note that $M(t_1 )M\left( {t_2 } \right) = M(t_1  + t_2 )$ and that $M\left( t \right)^{ - 1}  = M( - t)$.  Define a subset $G(A)$ of $GL(3,\mathbb{R})$ to be all matrices of the form 
\[G_A(\mathbf{x}) = G_A (x_1,x_2,x_3) = \left[ {\begin{array}{*{20}c}
	{M(x_1)} & {\begin{array}{*{20}c}
		x_2  \\
		x_3  \\
		\end{array} }  \\
	{\begin{array}{*{20}c}
		0 & 0  \\
		\end{array} } & 1  \\
	\end{array} } \right].\]
Then 
\[G_A (\mathbf{x})G_A (\mathbf{y}) = G_A (\mathbf{z})\] where $z_1  = x_1  + y_1 $ while
\[\left[ {\begin{array}{*{20}c}
	{z_2 }  \\
	{z_3 }  \\
	\end{array} } \right] = \left[ {\begin{array}{*{20}c}
	{x_2 }  \\
	{x_3 }  \\
	\end{array} } \right] + M(x_1 )\left[ {\begin{array}{*{20}c}
	{y_2 }  \\
	{y_3 }  \\
	\end{array} } \right].\]
Also, 
\[G_A (x_1,x_2,x_3)^{ - 1}  = G_A (\bar{x}_1,\bar{x}_2,\bar{x}_3)\] 
where $\bar{x}_1 =  - x_1$ and 
\[\left[ {\begin{array}{*{20}c}
	{\bar{x}_2}  \\
	{\bar{x}_3}  \\
	\end{array} } \right] =  - M(x_1)^{ - 1} \left[ {\begin{array}{*{20}c}
	x_2  \\
	x_3  \\
	\end{array} } \right] =  - M( - x_1)\left[ {\begin{array}{*{20}c}
	x_2  \\
	x_3  \\
	\end{array} } \right].\]
So $G(A)$ is a subgroup of $GL(3,\mathbb{R})$ as desired.
 
For future computations it will be useful to have an explicit formula for $M(t) = e^{At} = \left[ {\begin{array}{*{20}c}   
     {m_{11}(t)} & {m_{12}(t)} \\
     {m_{21}(t)} & {m_{22}(t)} \\
   \end{array}
}\right].$  From $Ae^{At} = e^{At}A$ we first find 
\begin{equation}\label{E:m}
m_{22}(t) = m_{11}(t) ,\,\,m_{12}(t) =\frac{c}{b}m_{21}(t)
\end{equation}
Then 
\begin{equation}  \label{E:M'}
M'(t) = Ae^{At} = AM(t)=\left[ {\begin{array}{*{20}c}   
	{am_{11}(t)+cm_{21}(t)} & {\frac{ac}{b}m_{21}(t)+cm_{11}} \\
	{bm_{11}(t)+am_{21}(t)} & {cm_{21}(t)+am_{11}(t)} \\
	\end{array}
}\right].
\end{equation}
This yields a pair of linear first order DE's which can be solved for $m_{11}(t)\,,\,m_{21}(t)$. The results are as follows:
\begin{itemize}
\item For $c>0$,  $M(t) = \left[ {\begin{array}{*{20}c}   
	{e^{at}\cosh(b \lambda t)} & {\lambda e^{at}\sinh(b \lambda t)} \\
	{\frac{1}{\lambda} e^{at}\sinh(b \lambda t)} & {e^{at}\cosh(b \lambda t)} \\
	\end{array}
}\right]$ where $\lambda = \sqrt{c/b}$.
\item For $c<0$,  $M(t) = \left[ {\begin{array}{*{20}c}   
	{e^{at}\cos(b \lambda t)} & {-\lambda e^{at}\sin(b \lambda t)} \\
	{\frac{1}{\lambda} e^{at}\sin(b \lambda t)} & {e^{at}\cos(b \lambda t)} \\
	\end{array}
}\right]$ where $\lambda = \sqrt{-c/b}$.
\item For $c=0$,  $M(t) = \left[ {\begin{array}{*{20}c}   
	{e^{at}} & {0} \\
	{bt e^{at}} & {e^{at}} \\
	\end{array}
}\right].$ 
\item Notice that in all cases $0 < \lambda \leq 1$ and $\text{det}(M(t)) = e^{2at}$.
\item Replacing $t$ by $-t$, notice that
\begin{gather} \label{E:m-}
m_{11}(-t)=m_{22}(-t)= e^{-2at}m_{11}(t) \notag \\
m_{21}(-t)=-e^{-2at}m_{21}(t)\\
m_{12}(-t)= -\frac{c}{b}e^{-2at}m_{21}(t). \notag
\end{gather}
\end{itemize}

The $(x_1,x_2,x_3)$ are global coordinates for $G\left( A \right)$ and $\left( {\partial /\partial x_1,\partial /\partial x_2,\partial /\partial x_3}\right)$ is a global frame for the tangent bundle.  Identifying the Lie algebra $\mathfrak{g}(A)$ with the tangent space at the identity $G_A (0,0,0)$ gives $e_i =\partial /\partial x_i|_{(0,0,0)} $.

If we left translate $\left( {e_1 ,e_2 ,e_3 } \right) = \left( {\partial /\partial x_1,\partial /\partial x_2,\partial /\partial x_3} \right)|_{(0,0,0)} $ we get a global, left-invariant, orthonormal frame $\left( {L_1 ,L_2 ,L_3 } \right)$ for the tangent bundle.  To express $\left( {L_1 ,L_2 ,L_3 } \right)$ in terms of the frame $\left( {\partial /\partial x_1,\partial /\partial x_2,\partial /\partial x_3} \right)$, we have 
\[
\left[ {\begin{array}{*{20}c} 
	{L_1 }  \\
	{L_2 }  \\
	{L_3 }  \\
	
	\end{array} } \right]_{(x_1 ,x_2 ,x_3 )}  = \psi _{g} ^* \left[ {\begin{array}{*{20}c}
	{\partial /\partial x_1}  \\
	{\partial /\partial x_2}  \\
	{\partial /\partial x_3}  \\
	
	\end{array} } \right]_{(0,0,0)} 
\]
where $\psi _{g } :G(A) \to G(A)$ is left translation by $g  = G_A (x_1 ,x_2 ,x_3 )$.

Evaluating gives
\begin{equation}
\left[ {\begin{array}{*{20}c}
	{L_1 }  \\
	{L_2 }  \\
	{L_3 }  \\
		\end{array} } \right]_{(x_1 ,x_2 ,x_3 )}
 = \left[ {\begin{array}{*{20}c}
	{\partial /\partial x_1}  \\
	{M(x_1 )^T \left[ {\begin{array}{*{20}c}
			{\partial /\partial x_2}  \\
			{\partial /\partial x_3}  \\
						\end{array} } \right]}  \\
	\end{array} } \right]  \notag
\end{equation}
 
or explicitly
\begin{align} \label{E:Left}
L_1 &= \partial /\partial x_1  \notag\\
L_2 &= m_{11}(x_1) \partial / \partial x_2 + m_{21}(x_1) \partial / \partial x_3   \\
L_3 &= m_{12}(x_1) \partial / \partial x_2 + m_{22}(x_1) \partial / \partial x_3 \notag   . 
\end{align} 
Solving then yields 
\begin{equation}  \label{E:partial}
\left[ {\begin{array}{*{20}c}
	{\partial /\partial x_1}  \\
	{\partial /\partial x_2}  \\
	{\partial /\partial x_3}  \\
	\end{array} } \right]
       = \left[ {\begin{array}{*{20}c}
	{L_1 }  \\
	{M( - x_1 )^T \left[ {\begin{array}{*{20}c}
			{L_2 }  \\
			{L_3 }  \\
			
			\end{array} } \right]}  \\
	
	\end{array} } \right]
\end{equation}
or explicitly, using (\ref{E:m-}),
\begin{align} \label{E:part}
 \partial /\partial x_1 &= L_1 \notag\\
\partial /\partial x_2  &=  e^{-2ax_1}m_{11}(x_1)L_2 -e^{-2ax_1}m_{21}(x_1)L_3  \\
\partial/\partial x_3 &= -e^{-2ax_1}\frac{c}{b}m_{21}(x_1)L_2 +e^{-2ax_1}m_{11} (x_1)L_3 \notag   . 
\end{align} 

If we right translate $(e_1,e_2,e_3)$ we get a global right-invariant frame $(R_1,R_2,R_3)$ for the tangent bundle.  These right-invariant vector fields $R_i$ turn out to be Killing fields for the left-invariant metric on $G$.  To express $\left( {R_1 ,R_2 ,R_3 } \right)$ in terms of the frame $\left( {\partial /\partial x_1,\partial /\partial x_2,\partial /\partial x_3} \right)$, we have 
\[
\left[ {\begin{array}{*{20}c} 
	{R_1 }  \\
	{R_2 }  \\
	{R_3 }  \\
	
	\end{array} } \right]_{(x_1 ,x_2 ,x_3 )}  = \phi _{g} ^* \left[ {\begin{array}{*{20}c}
	{\partial /\partial x_1}  \\
	{\partial /\partial x_2}  \\
	{\partial /\partial x_3}  \\
	
	\end{array} } \right]_{(0,0,0)} 
\]
where $\phi _{g } :G(A) \to G(A)$ is right translation by $g  = G_A (x_1 ,x_2 ,x_3 )$.
A lengthy evaluation gives
\[\begin{gathered}
\left[ {\begin{array}{*{20}c}
	{R_1 }  \\
	{R_2 }  \\
	{R_3 }  \\
	
	\end{array} } \right]_{(x_1 ,x_2 ,x_3 )}
= \left[ {\begin{array}{*{20}c} 
	{\partial /\partial x_1 + (a x_2 + c x_3) \partial / \partial x_2 + (b x_2 + a x_3) \partial / \partial x_3}   \\
{\partial / \partial x_2}  \\
	{\partial / \partial x_3}  \\
	
	\end{array} } \right] 
\end{gathered} 
\]

Using (\ref{E:part}), we can then express the frame $\{R_i\}$ in terms of the left-invariant frame $\{L_i\}$:

\begin{multline} 
R_1=L_1+e^{-2ax_1}\left[m_{11}(x_1)(ax_2+cx_3)-\frac{c}{b}m_{21}(x_1)(bx_2+ax_3)     \right]L_2  \\
+e^{-2ax_1}\left[-m_{21}(x_1)(ax_2+cx_3)+  m_{11}(x_1)(bx_2+ax_3) \right]L_3 \notag
\end{multline} 
\begin{align} \label{E:Right}
R_2 &= e^{-2ax_1}m_{11}(x_1)L_2-e^{-2ax_1}m_{21}(x_1)L_3  \\
R_3 &=-e^{2ax_1}\frac{c}{b} m_{21}(x_1)L_2 + e^{-2ax_1}m_{11}(x_1)L_3. \notag
\end{align}

\section{Isometry groups and Killing vector fields}

The isometries of a Lie group $G$ with a left-invariant metric $g$ themselves form a Lie group, $\text{Isom}(G,g)$.  Given a one-parameter subgroup $\{\psi_t:G\rightarrow G:t \in \mathbb{R}\}$ of $\text{Isom}(G,g)$, we get a Killing vector field $X = X_i \partial / \partial x_i$ on $G$, where $X_i = d(x_i\circ \psi_t)/dt\arrowvert_{t=0}$. 

If $\psi_t$ is the product of two one parameter subgroups, $\psi_t = \psi_{1,t} \circ \psi_{2,t}$, then the Killing vector field $X$ corresponding to $\psi_t$ is given by $X = X_1 + X_2$, where $X_i$ is the Killing vector field correponding to $\psi_{i,t}$.

For any $x \in G$, left translation by $x$ gives an isometry $L_x$ of $G$, and $G$ can be regarded as a subgroup of $\text{Isom}(G,g)$. Then a one parameter subgroup of $G$ will give a one parameter subgroup of $\text{Isom}(G,g)$ and hence a Killing vector field.  The one parameter subgoups of $G$ correspond to elements of the Lie algebra $\mathfrak{g}$.  The Killing vector field corresponding to an element $e \in \mathfrak{g}$ is just the right invariant vector field obtained by right translating $e$. In particular, the basis elements $e_1,e_2,e_3$ give rise to the Killing fields $R_1,R_2,R_3$.

Write $\text{Isom}_e(G,g)$ for the subgroup of $\text{Isom}(G,g)$ consisting of isometries leaving the identity $e \in G$ fixed.
Then $\text{Isom}(G,g)$ is a product $ \text{Isom}_e(G,g) \cdot G$, and any Killing vector field can be written as a sum of a Killing vector field vanishing at $e$ and a right-invariant vector field. 

$X$ is a Killing field for $g$ if the Lie derivative of $g$ with respect to $X$ vanishes, that is if $(\mathcal{L}_Xg)(Y,Z) = 0$ for all $Y,Z$.  We express this condition in terms of the orthonormal frame $L_1,L_2,L_3$ of left-invariant fields.
\begin{proposition} \label{P:Killing}
A vector field $X=X^iL_i$ is a Killling vector field if and only if 
\begin{equation} \label{E:Kill}
L_j(X^k) + L_k(X^j) + X^i(c^j_{ki} + c^k_{ji}) = 0
\end{equation}
for all $j,k$.
\end{proposition}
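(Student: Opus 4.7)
The plan is to expand the Killing condition $(\mathcal{L}_X g)(Y,Z) = 0$ using the standard identity
\[
(\mathcal{L}_X g)(Y,Z) = X\bigl(g(Y,Z)\bigr) - g([X,Y],Z) - g(Y,[X,Z]),
\]
and then to exploit the special feature of left-invariant frames: both sides of the equation are tensorial in $Y$ and $Z$, so it suffices to check the condition on basis vectors $Y = L_j$, $Z = L_k$. Because the metric $g$ is left-invariant and $\{L_1,L_2,L_3\}$ is a left-invariant orthonormal frame, $g(L_j,L_k) = \delta_{jk}$ is a constant function on $G$, so the first term $L_j$-or-rather $X(g(L_j,L_k))$ vanishes.

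Next I would compute $[X,L_j]$ for $X = X^i L_i$ via the product rule
\[
[X^i L_i, L_j] = X^i [L_i,L_j] - L_j(X^i) L_i = X^i c^m_{ij} L_m - L_j(X^i) L_i,
\]
where $c^m_{ij}$ are the structure constants defined by $[L_i,L_j] = c^m_{ij} L_m$ (equivalently, by $[e_i,e_j] = c^m_{ij} e_m$, since left-invariant brackets match Lie algebra brackets at $e$). Taking the inner product with $L_k$ and using orthonormality gives
\[
g([X,L_j], L_k) = X^i c^k_{ij} - L_j(X^k),
\]
and symmetrically $g(L_j,[X,L_k]) = X^i c^j_{ik} - L_k(X^j)$.

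Substituting into the expanded Killing formula yields
\[
(\mathcal{L}_X g)(L_j,L_k) = L_j(X^k) + L_k(X^j) - X^i\bigl(c^k_{ij} + c^j_{ik}\bigr).
\]
Finally, I would invoke the antisymmetry $c^m_{ij} = -c^m_{ji}$ of the structure constants to rewrite $-(c^k_{ij} + c^j_{ik}) = c^k_{ji} + c^j_{ki}$, yielding the stated identity \eqref{E:Kill}. Setting this equal to zero for all $j,k$ gives the Killing condition, and conversely any $X$ satisfying \eqref{E:Kill} is Killing by the tensorial nature of $\mathcal{L}_X g$.

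The main obstacle is purely one of bookkeeping: keeping the index positions consistent and using the right sign convention for structure constants in the product-rule expansion of $[fL_i, L_j]$. There is no analytical difficulty — once the Lie derivative formula is written in terms of brackets and the observation $g(L_j,L_k) = \text{const}$ is made, the result follows from a short index manipulation aided by antisymmetry.
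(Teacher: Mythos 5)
Your proof is correct, and it takes a genuinely different (and slightly more elementary) route than the paper. The paper works through the Levi-Civita connection: it rewrites the Killing condition as $g(\nabla_Y X,Z)+g(Y,\nabla_Z X)=0$, evaluates on $Y=L_j$, $Z=L_k$, and then uses the explicit Christoffel symbols of the orthonormal left-invariant frame, $\Gamma^j_{ki}=\tfrac{1}{2}(c^j_{ki}-c^k_{ij}+c^i_{jk})$, together with the identity $\Gamma^k_{ji}+\Gamma^j_{ki}=c^j_{ki}+c^k_{ji}$, to land on \eqref{E:Kill}. You instead bypass the connection entirely, using the purely bracket-based expansion $(\mathcal{L}_Xg)(Y,Z)=X(g(Y,Z))-g([X,Y],Z)-g(Y,[X,Z])$, the constancy of $g(L_j,L_k)=\delta_{jk}$, the product rule $[X^iL_i,L_j]=X^ic^m_{ij}L_m-L_j(X^i)L_i$, and antisymmetry of the structure constants; your sign bookkeeping checks out, and the tensoriality of $\mathcal{L}_Xg$ justifies testing only on the frame. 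Your version is shorter and needs no Koszul formula, while the paper's version has the practical advantage that it introduces the symbols $\Gamma^j_{ki}$, which are reused immediately afterwards to define the matrix $P(X)$ and the parallel-at-the-identity condition, so the connection computation is not wasted effort in the larger development.
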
	
\begin{proof}
We have
\[\mathcal{L}_X(g(Y,Z)) =(\mathcal{L}_Xg)(Y,Z)+g(\mathcal{L}_X(Y),Z) +g(Y,\mathcal{L}_X(Z)) .\]
For the standard torsion free, inner product preserving connection on $G$ we have
\[(\mathcal{L}_X)(g(Y,Z))=X(g(Y,Z))=g(\nabla_XY,Z)+g(Y,\nabla_XZ)\]
and
\[\mathcal{L}_X(Y)=[X,Y]=\nabla_XY-\nabla_YX.\]
Then the condition $(\mathcal{L}_Xg)(Y,Z) = 0$ reduces to \[g(\nabla_YX,Z) + g(Y,\nabla_ZX) = 0.\]
Take $Y = L_j,Z= L_k$, write $X = X^i L_i$, and recall that the frame $\{L_i\}$ is orthonormal.  Then
\[g(\nabla_{L_j}X,L_k) =g(L_j(X^i)L_i+\Gamma^l_{ji}L_l,L_k)= L_j(X^k) + X^i \Gamma^k_{ji}\]
and similarly
\[g(L_j,\nabla_{L_k}X) = L_k(X^j) + X^i \Gamma^j_{ki}.\]
Here the symbols $\Gamma^j_{ik}$ are defined by $ \nabla_{L_k}L_i=\Gamma^j_{ki}L_j  $ and for this orthnormal frame are given by $\Gamma^j_{ki} = \frac{1}{2}(c^j_{ki}-c^k_{ij} + c^i_{jk})$. (Recall that the Lie algebra of left-invariant vector fields is isomorphic to $\mathfrak{g}$, so $[L_i,L_j] = c^k_{ij}L_k $.)  Notice that $\Gamma^k_{ji} + \Gamma^j_{ki} = c^j_{ki} + c^k_{ji}$.  Then the condition for a Killing field becomes
\[L_j(X^k) + L_k(X^j) + X^i(c^j_{ki} + c^k_{ji}) = 0\]
for all $j,k$ as claimed.
\end{proof}	
If we define matrices $D(X)$ and $C(X)$ for any vector field $X=X^iL_i$ by
\begin{equation}  \label{E:DC}
D(X)_{jk} = L_jX^k\,,\,\,\,\,C(X)_{jk}= X^i(c^j_{ki} + c^k_{ji})
\end{equation}
then the condition for $X$ to be a Killing field is just
\begin{equation}  \label{E:K}
D(X) + D(X)^T + C(X) = 0.
\end{equation}

Recall that the Lie algebra structure constants (for $\mathfrak{g}$ or for the left-invariant vector fields on $G$ ) are given by 
\begin{align}
c^2_{12} &= -c^2_{21} =a  &  c^2_{13} &= -c^2_{31} =c \notag \\
c^3_{12} &= -c^3_{21} = b &  c^3_{13} &= -c^3_{31} =a. 
  \notag
\end{align}
For future use we compute the matrix
\begin{equation} \label{E:Xc}
C(X) = \left[ {\begin{array}{*{20}c}
	{0} & {aX^2+cX^3} & {bX^2+aX^3}	\\
	{aX^2+cX^3} & {-2aX^1} & {-(b+c)X^1} \\
	{bX^2+aX^3} & {-(b+c)X^1} & {-2aX^1}	
	\end{array} } \right]
\end{equation}

As a tedious exercise, one can check that each right invariant field $R_i$ satisfies the condition (\ref{E:Kill}).

A Killing vector field $X$ is ``parallel at the identity'' if $\nabla_Y (X)$ vanishes at $e$ for every vector field $Y$.  Evidently it is enough to check that $\nabla_{L_i} (X) \arrowvert _e = 0$ for each $L_i$.  Writing $X = X^j L_j$, we find
\[\nabla_{L_i} (X) = L_i(X^j) L_j + X^j \nabla_{L_i} (L_j) = L_i(X^k) L_k + X^j \Gamma^k_{ij} L_k.\]
Define a matrix $P(X)$ by
\begin{equation} \label{E:X(k,i)}
P(X)_{ki} = \{ L_i(X^k) + X^j \Gamma^k_{ij}\} \arrowvert _e
\end{equation}
(where $\arrowvert_e$ means evaluated at the identity $e=(0,0,0) $).  We then have
\begin{proposition} \label{P:par}
A Killing vector field $X=X^iL_i$ is parallel at the identity if and only if $P(X) = 0$.
\end{proposition}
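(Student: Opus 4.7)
The plan is to prove this essentially by unpacking the definition of the matrix $P(X)$, since the relevant local computation of $\nabla_{L_i}(X)$ has already been recorded in the paragraph preceding the statement. Note that the hypothesis that $X$ is a Killing field plays no role in the equivalence itself; the proposition just rewrites the condition ``parallel at $e$'' as a matrix equation. I would therefore state and prove the equivalence for any vector field $X = X^i L_i$.

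First I would reduce the universal quantifier ``for every vector field $Y$'' to the finite check $\nabla_{L_i}(X)|_e = 0$ for $i=1,2,3$. The key point is that $\nabla_Y(X)$ is $C^\infty(G)$-linear (tensorial) in $Y$, so $\nabla_Y(X)|_e$ depends only on $Y|_e$. Since $\{L_1|_e, L_2|_e, L_3|_e\} = \{e_1,e_2,e_3\}$ is a basis of $T_eG$, any $Y|_e$ is a linear combination of the $L_i|_e$, and it suffices to test on the frame.

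Next I would plug in the formula derived in the paragraph just above the statement, namely
\[
\nabla_{L_i}(X) = L_i(X^k) L_k + X^j \Gamma^k_{ij} L_k = \bigl(L_i(X^k) + X^j \Gamma^k_{ij}\bigr) L_k,
\]
and evaluate at $e$. Because $\{L_1|_e, L_2|_e, L_3|_e\}$ is a basis of $T_eG$, the vector $\nabla_{L_i}(X)|_e$ vanishes if and only if each of its coefficients with respect to this basis vanishes, i.e.\ if and only if
\[
\bigl\{L_i(X^k) + X^j \Gamma^k_{ij}\bigr\}\Big|_e = 0 \quad \text{for every } k.
\]
By the definition (\ref{E:X(k,i)}), this is exactly the statement that $P(X)_{ki} = 0$ for every $k$. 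Letting $i$ range over $1,2,3$, we get that $\nabla_{L_i}(X)|_e = 0$ for all $i$ if and only if $P(X) = 0$ as a matrix, completing the proof.

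Since the argument is purely a matter of definitions and the linearity of $\nabla$ in its first argument, there is no substantive obstacle; the only thing worth flagging is the tensoriality step that legitimizes replacing ``for every $Y$'' by ``for $Y = L_1, L_2, L_3$.''
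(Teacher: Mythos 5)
Your argument is correct and is essentially the paper's own: the paper treats the proposition as an immediate consequence of the computation $\nabla_{L_i}(X) = \bigl(L_i(X^k) + X^j\Gamma^k_{ij}\bigr)L_k$ in the paragraph preceding the statement, together with the reduction to testing $\nabla$ on the frame $\{L_i\}$ and the definition (\ref{E:X(k,i)}) of $P(X)$. You have simply made explicit the tensoriality step and the fact that $\{L_k|_e\}$ is a basis, which the paper leaves implicit ("evidently it is enough to check"), so there is nothing to correct.
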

Notice that $P(X)$ is linear in $X$, that is, if $X = \alpha_r X_r$, then $P(X) = \alpha_r P(X_r)$.

If we define a matrix $\Gamma(X)$ by $\Gamma(X)_{ki} =X^j \Gamma^k_{ij}\ \arrowvert _e $, then we have
\[P(X) = D(X)^T + \Gamma(X).\]
Using  $\Gamma^j_{ki} = \frac{1}{2}(c^j_{ki}-c^k_{ij} + c^i_{jk})$ we compute for future use the matrix 
\begin{equation} \label{E:Xgamma}
\Gamma(X) =	 \left[ {\begin{array}{*{20}c}
			{0} & {\left( aX^2+\frac{1}{2}(b+c) X^3\right)\arrowvert_e} & {\left( \frac{1}{2}(b+c)X^2+aX^3\right)\arrowvert_e}	\\
			{-\frac{1}{2}(b-c)X^3\arrowvert_e} & {-aX^1\arrowvert_e} & {-\frac{1}{2}(b+c)X^1\arrowvert_e} \\
			{\frac{1}{2}(b-c)X^2\arrowvert_e} & {-\frac{1}{2}(b+c)X^1\arrowvert_e} & {-aX^1\arrowvert_e}	
	\end{array} } \right]
\end{equation}
Using the explicit values found above in (\ref{E:Right}) for the right invariant vector fields $R_j$, we can calculate (see section 12) the $P(R_j)$ matrices for the $R_j$ (which span the space of all right invariant vector fields).  The results are as follows:
\begin{gather} 
 P(R_1) =  \left[ {\begin{array}{*{20}c}
	{0} & {0} & {0}	\\
	{0} & {0} & {-\frac{b-c}{2}} \\
	{0} & {\frac{b-c}{2}} & {0}	
		\end{array} } \right]   \notag \\  
 P(R_2) =  \left[ {\begin{array}{*{20}c}
	{0} & {a} & {\frac{b+c}{2}}	\\
	{-a} & {0} & {0} \\  
	{-\frac{b+c}{2}} & {0} & {0}	
	\end{array} } \right]    \label{R(k,i)}   \\  
 P(R_3) =  \left[ {\begin{array}{*{20}c}
	{0} & {\frac{b+c}{2}} & {a}	\\
	{-\frac{b+c}{2}} & {0} & {0} \\
	{-a} & {0} & {0}	
	\end{array} } \right]  \notag  
\end{gather}

For a Killing vector field $X$ vanishing at $e$ which  corresponds to a one parameter subgroup $\psi_t$ of $\text{Isom}_e(G,g)$, we can compute $P(X)$ as follows. Since each $ x_j \circ \psi_t$ vanishes at $e = (0,0,0)$, we can write
\[x_j \circ \psi_t = a(t)_{ji}x_i + r_t^j   \]
where each $r_t^j$ and all derivatives $\partial(r_t^j)/\partial x_i$ vanish at $e$.  The matrix $a(t)$ is just the Jacobean of $\psi_t$ evaluated at $e$, that is,
\begin{equation}  \label{E:a(t)}
a(t)_{ji} = \partial(x_j \circ \psi_t)/\partial(x_i) \arrowvert_e.
\end{equation}
Each matrix $a(t)$ gives a linear map of the tangent space at $e$ to itself, and, since each $\psi_t$ is an isometry, $a(t)$ will be an orthogonal transformation on $\mathfrak{g}$ taking the standard basis $e_1,e_2,e_3$ to a new orthonormal basis $b(t)_1,b(t)_2,b(t)_3:$
\begin{equation}  \label{E:basischange}
b(t)_i = (\psi_t)_*(e_i) = a(t)_{ji} e_j.
\end{equation}
This one parameter subgroup $a(t)$ of the orthogonal group $O(3)$ determines $P(X)$ as follows. Writing $X = F^j \partial/\partial x_j$ where $F^j = (x_j \circ \psi_t)' \arrowvert _{t=0}\,$, we find
\[F^j = [(a(t)_{ji})'x_i + (r_t^j)']\arrowvert _{t=0} =\bar{a}_{ji} x_i +\bar{r}^j  \]
where $\bar{a} = (a(t))'\arrowvert _{t=0}$ and each $\bar{r}^j$ and all its first order partials vanish at $e$.  Then, using (\ref{E:partial}), $X = F^j \partial/\partial x_j = X^j L_j$ where
\[X^1 = F^1 = \bar{a}_{1i}x_i + \bar{r}^j\] and \[\left[\begin{array}{*{20}c}
X^2 \\
X^3
\end{array} \right] =
M(-x_1) \left[\begin{array}{*{20}c}
{F^2}\\
{F^3}
\end{array} \right] =
M(-x_1) \left[\begin{array}{*{20}c}
{\bar{a}_{2i}x_i + \bar{r}^2}\\
{\bar{a}_{3i}x_i + \bar{r}^3}
\end{array} \right] .\]

Recalling that each $X^i$ vanishes at $e$ and that $L_i = \partial/\partial x_i$ at $e$, the formula for $P(X)_{ki}$ becomes 
\begin{equation} P(X)_{ki} = \{ L_i(X^k) + X^j \Gamma^k_{ij}\} \arrowvert _e  = \{\partial(X^k)/\partial x_i\} \arrowvert _e.
\end{equation} 
But then since $M(0) = I$ and all $\bar{r}^j$ and their first order partial derivatives vanish at $e = (0,0,0)$, we have the following result:
\begin{proposition} \label{P:Xvanate}
Let $\psi_t$ be a one parameter subgroup of $\text{Isom}_e(G,g)$ with corresponding Killing vector field $X=X^iL_i$ vanishing at $e$.  Then 	   
\begin{equation} \label {E:Xvanate}
P(X)_{ki} = \bar{a}_{ki} = (a(t)_{ki})'\lvert_{t=0}  = \frac{d}{dt} \left[\partial(x_j \circ \psi_t)/\partial(x_i)\arrowvert_e \right]  \lvert _{t=0}.
\end{equation}    
\end{proposition}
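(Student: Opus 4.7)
The plan is to unpack the definition of $P(X)$ and apply the chain rule. Proposition~\ref{P:par} gives $P(X)_{ki} = \{L_i(X^k) + X^j \Gamma^k_{ij}\}|_e$. Because $X$ vanishes at $e$ and the left-invariant frame satisfies $L_i|_e = \partial/\partial x_i|_e$, the Christoffel term $X^j\Gamma^k_{ij}|_e$ drops out and $L_i(X^k)|_e = \partial X^k/\partial x_i|_e$. This reduction is in fact already recorded in the displayed equation immediately preceding the proposition, so the work is to evaluate $\partial X^k/\partial x_i|_e$ from the expressions
\[X^1 = \bar{a}_{1i}x_i + \bar{r}^1, \qquad
\left[\begin{array}{c} X^2 \\ X^3 \end{array}\right] = M(-x_1)\left[\begin{array}{c} \bar{a}_{2i}x_i + \bar{r}^2 \\ \bar{a}_{3i}x_i + \bar{r}^3 \end{array}\right]\]
derived just above, where each $\bar{r}^j$ and all its first partials vanish at $e$.

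For $k=1$ this is immediate: $\partial X^1/\partial x_i|_e = \bar{a}_{1i} + \partial \bar{r}^1/\partial x_i|_e = \bar{a}_{1i}$. For $k \in \{2,3\}$, the product rule produces two terms. The first, in which $M(-x_1)$ is differentiated (only with respect to $x_1$) and multiplied by $F^k = \bar{a}_{ki}x_i + \bar{r}^k$, vanishes at $e$ because $F^k|_e = 0$. The second term multiplies $M(-x_1)|_e = M(0) = I$ against $\partial F^k/\partial x_i|_e = \bar{a}_{ki}$. Hence $\partial X^k/\partial x_i|_e = \bar{a}_{ki}$ in all cases.

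Finally, the equalities $\bar{a}_{ki} = (a(t)_{ki})'|_{t=0} = \frac{d}{dt}[\partial(x_k\circ\psi_t)/\partial x_i |_e]|_{t=0}$ are simply the definitions of $\bar{a}$ and of $a(t)$ from (\ref{E:a(t)}), so the chain of equalities in the proposition is complete.

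There is no real obstacle here; the argument is essentially bookkeeping. The only thing to be careful about is noting that the two places where trouble could enter, namely the $\Gamma$-term in the definition of $P$ and the $x_1$-derivative of the factor $M(-x_1)$, both drop out precisely because $X|_e = 0$ (equivalently $F^k|_e = 0$). That single observation is what makes the formula reduce cleanly to the Jacobian derivative $\bar{a}_{ki}$.
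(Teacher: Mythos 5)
Your proof is correct and follows essentially the same route as the paper: reduce $P(X)_{ki}$ to $\partial X^k/\partial x_i\arrowvert_e$ using $X\arrowvert_e=0$ and $L_i\arrowvert_e=\partial/\partial x_i\arrowvert_e$, then read off $\bar a_{ki}$ from $F^k=\bar a_{ki}x_i+\bar r^k$ together with $M(0)=I$ and the vanishing of the $\bar r^j$ and their first partials. Your explicit product-rule remark (the $x_1$-derivative of $M(-x_1)$ is killed by $F^k\arrowvert_e=0$) is the same observation the paper leaves implicit, so there is nothing to add.
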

As a corollary of propositions (\ref{P:par}) and (\ref{P:Xvanate}) we have
\begin{corollary}  \label{C:Xvanate}
If $a(t)$ is independent of $t$, then $P(X) = 0 $ and $X$ is parallel at the identity.
\end{corollary}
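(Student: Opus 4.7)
The statement is an immediate consequence of the two propositions it cites, so the plan is really just to assemble them. My plan is as follows.

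First, I would unpack the hypothesis: saying that $a(t)$ is independent of $t$ means that the map $t \mapsto a(t)$ is the constant map sending every $t$ to the identity of $O(3)$ (it must be the identity because $\psi_0$ is the identity isometry, so $a(0) = I$). In particular the derivative $\bar{a} = (a(t))'\lvert_{t=0}$ vanishes, i.e. $\bar{a}_{ki} = 0$ for every $k,i$.

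Next, I would invoke Proposition \ref{P:Xvanate}, applied to the Killing field $X$ vanishing at $e$ corresponding to the one-parameter subgroup $\psi_t \subset \text{Isom}_e(G,g)$. That proposition gives $P(X)_{ki} = \bar{a}_{ki}$, and by the previous step this is $0$ for all $k,i$. Hence $P(X) = 0$ as a matrix.

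Finally, I would appeal to Proposition \ref{P:par}: $X$ is a Killing vector field (it arises from the one-parameter subgroup $\psi_t$ of isometries), and the vanishing of $P(X)$ is exactly the criterion there for $X$ to be parallel at the identity. Thus $X$ is parallel at $e$, completing the proof.

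There is no real obstacle here; the content of the corollary is entirely absorbed into Propositions \ref{P:Xvanate} and \ref{P:par}, and the corollary exists mainly to record the convenient sufficient condition ``$a(t)$ constant in $t$'' that will presumably be invoked in later sections when checking whether particular one-parameter groups of isometries fixing $e$ produce parallel Killing fields.
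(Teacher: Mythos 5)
Your proof is correct and follows exactly the route the paper intends: the paper presents this as an immediate corollary of Propositions \ref{P:par} and \ref{P:Xvanate}, and your argument (constant $a(t)$ forces $\bar{a}=0$, hence $P(X)=0$ by Proposition \ref{P:Xvanate}, hence $X$ is parallel at $e$ by Proposition \ref{P:par}) is precisely that assembly. Nothing is missing.
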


\section{The index of symmetry $i(G)$ and subindex $\bar{i}(G)$.}

Write $S_e$ for the subspace of $\mathfrak{g}$ (considered as the tangent space to $G$ at $e$) consisting of all tangent vectors $x=X\arrowvert_e$ where $X$ is a Killing vector field parallel at $e$.  Then the index of symmetry for $G$ is $i(G) =i(G,g)= \text{dim}(S_e)$.

Let $\bar{S}_e$ be the subspace of $S_e$ consisting of all vectors $x = R\arrowvert_e$ where $R$ is a right invariant vector field parallel at $e$. Then define the \textit{subindex} $\bar{i}(G) = \text{dim}(\bar{S}_e)$.  Given the explicit formulas (\ref{R(k,i)}) for the $R_i$, which span the right invariant vector fields, we can compute $\bar{i}(G)$ with relative ease. The following proposition suggests the usefulness of $\bar{i}(G)$.
\begin{proposition} \label{P:5.1}
If every Killing field which vanishes at $e$ is actually parallel at $e$, then $\bar{i}(G) = i(G)$.
\end{proposition}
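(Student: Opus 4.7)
The plan is to show the two inclusions $\bar{S}_e \subseteq S_e$ (which is immediate from the definitions) and $S_e \subseteq \bar{S}_e$ (the content of the proposition), under the stated hypothesis. The containment $\bar{S}_e \subseteq S_e$ holds by construction, so only the reverse containment is at issue, and it is what the hypothesis must buy us.

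To prove $S_e \subseteq \bar{S}_e$, I would take an arbitrary $v \in S_e$ and produce a right-invariant Killing field parallel at $e$ whose value at $e$ is $v$. By definition there is some Killing field $X$, parallel at $e$, with $X|_e = v$. Invoking the decomposition $\text{Isom}(G,g) = \text{Isom}_e(G,g) \cdot G$ recalled earlier in section~5, I would write $X = X_0 + R$ where $X_0$ is a Killing field vanishing at $e$ and $R$ is a right-invariant vector field (hence itself a Killing field). Then $v = X|_e = R|_e$, so it suffices to verify that $R$ is parallel at $e$.

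This is where the hypothesis enters: by assumption, $X_0$ (being a Killing field vanishing at $e$) is parallel at $e$. Since the condition of being parallel at $e$ is linear in the vector field (because $\nabla$ is linear in its second argument, so $\nabla_Y(X_0 + R)|_e = \nabla_Y X_0|_e + \nabla_Y R|_e$, and equivalently because $P$ is linear in $X$), and since $X = X_0 + R$ is parallel at $e$ by choice while $X_0$ is parallel at $e$ by hypothesis, the difference $R = X - X_0$ must also be parallel at $e$. Therefore $v = R|_e \in \bar{S}_e$, giving $S_e \subseteq \bar{S}_e$ and hence $\bar{i}(G) = i(G)$.

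There is no substantive obstacle here; the argument is a short linear-algebra manipulation once the Killing-field decomposition into an $e$-fixing part and a right-invariant part is in hand. The only point that needs care is the linearity of the "parallel at $e$" condition, but this is built into the formula $P(X) = D(X)^T + \Gamma(X)$ from section~5 and follows from the $\mathbb{R}$-linearity of $\nabla_Y$ in its second argument.
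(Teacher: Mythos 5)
Your argument is correct and is essentially the paper's own proof: decompose a Killing field parallel at $e$ as a right-invariant field plus a Killing field vanishing at $e$, then use the hypothesis and the linearity of the parallel-at-$e$ condition to conclude the right-invariant part is parallel at $e$. No further comment is needed.
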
	
\begin{proof}
	Take $x = Y\arrowvert_e \in S_e$ where $Y$ is a Killing vector field parallel at $e$.  We must show that $x \in \bar{S}_e$.  Write $Y = R + X$ where $R$ is a right invariant field and $X$ is a Killing field vanishing at $e$. Then $x = Y\arrowvert_e = R\arrowvert_e$, so if $R$ is parallel at $e$, then $x \in \bar{i}(G)$ and we are done.  But $Y$ is parallel at $e$ and $X$ is parallel at  $e$ by hypothesis, so $R = Y - X$ is in fact parallel at $e$.	
\end{proof}
Then using corollary \ref{C:Xvanate} we get
\begin{corollary} \label{C:5.1}
If for every one parameter subgroup $\psi_t$ of $\text{Isom}_e(G,g)$ the matrices $a(t)$ are independent of $t$, then $\bar{i}(G) = i(G)$.
\end{corollary}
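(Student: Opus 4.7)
The plan is to reduce Corollary~\ref{C:5.1} to a direct application of the two immediately preceding results: Proposition~\ref{P:5.1} and Corollary~\ref{C:Xvanate}. The hypothesis of Proposition~\ref{P:5.1} asks that every Killing field vanishing at $e$ be parallel at $e$; the hypothesis of Corollary~\ref{C:Xvanate} gives a sufficient condition on $a(t)$ for a single such Killing field $X$ to satisfy $P(X)=0$. So the entire task is to show that the current hypothesis (on every one-parameter subgroup of $\text{Isom}_e(G,g)$) supplies the hypothesis of Proposition~\ref{P:5.1}.

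First, I would take an arbitrary Killing field $X$ vanishing at $e$. The discussion at the end of section~5 (preceding Proposition~\ref{P:Xvanate}) shows that $X$ arises from some one-parameter subgroup $\psi_t$ of $\text{Isom}_e(G,g)$: the correspondence comes from the fact that $\text{Isom}(G,g) = \text{Isom}_e(G,g)\cdot G$ and from exponentiating the Killing field within the isometry group fixing $e$. By the hypothesis of the corollary, the associated Jacobian matrices $a(t)$ defined by (\ref{E:a(t)}) are independent of $t$. Applying Corollary~\ref{C:Xvanate} then yields $P(X)=0$, which by Proposition~\ref{P:par} means $X$ is parallel at $e$.

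Since $X$ was arbitrary among Killing fields vanishing at $e$, the hypothesis of Proposition~\ref{P:5.1} is satisfied, and we conclude $\bar{i}(G) = i(G)$.

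There is no real obstacle: the corollary is a formal chaining of two prior results, and the only thing to verify is that ``for every one-parameter subgroup'' is exactly what allows us to apply Corollary~\ref{C:Xvanate} to \emph{every} Killing field vanishing at $e$ (not just to one). I would keep the write-up to a few lines, essentially just citing \ref{P:5.1} and \ref{C:Xvanate} in sequence.
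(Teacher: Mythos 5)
Your proposal is correct and takes essentially the same route as the paper: the paper obtains this corollary precisely by applying Corollary~\ref{C:Xvanate} to the one-parameter subgroup generating each Killing field that vanishes at $e$, thereby verifying the hypothesis of Proposition~\ref{P:5.1}. Your chaining of \ref{C:Xvanate}, \ref{P:par}, and \ref{P:5.1} is exactly that argument.
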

To calculate $\bar{i}(G)$, note that a Killing field $X = \alpha_j R_j$ is parallel at $e$ if and only if $\alpha_j P(R_j) = 0$.  $X$ then contributes $\alpha_j e_j$ to $\bar{S}_e$.   Using the formulas (\ref{R(k,i)}) for $P(R_j)$ we get three independent equations for the $\alpha_j$:
\begin{gather}
\alpha_1 \left (\dfrac{b-c}{2}\right) = 0  \notag \\
\alpha_2\, a + \alpha_3 \left(\dfrac{b+c}{2} \right) = 0   \\
\alpha_2 \left(\dfrac{b+c}{2}\right) + \alpha_3 \, a = 0  .  \notag
\end{gather}

So when $b=c$, we can take $\alpha_1 = 1$ and $\alpha_2 = \alpha_3 = 0$ and get the vector $e_1 \in \bar{S}_e$. This occurs for the algebras $\mathfrak{g}(K)$ with $K > 0$ for the triples $(a,b,b)$, for the algebra $\mathfrak{g}_I$ for the triples $(a,0,0)$, for the unimodular algebra $\mathfrak{e}(1,1)$ for the triples $(0,b,b)$, and for the abelian algebra with triple $(0,0,0)$.  

There is a nontrivial solution for $\alpha_2$ and $\alpha_3 $ only if the determinant of the coefficents, $a^2 - \left((b+c)/2 \right)^2 $, vanishes, that is, if $a = \left(\dfrac{b+c}{2} \right)$. In this case, when $2a = b+c$, we can take $\alpha_2 = -\alpha_3 = 1$ and get the vector $e_2 - e_3 \in \bar{S}_e$.  This occurs for the algebras $\mathfrak{g}(K)$  with $K \leq 1$ and the triples $(a,a+a\sqrt{1-K},a-a\sqrt{1-K})$ and for the unimodular algebra $\mathfrak{e}(2)$ with the triples $(0,b,-b)$ (and for the abelian algebra with the triple $(0,0,0)$).

The vectors found above give a basis for $\bar{S}_e$, so we obtain the following results for the subindex $\bar{i}(G)$:

\begin{itemize}
\item For $\mathfrak{g}(K)$ with $K>0\,,\,K\neq1$, triple $(a,b,b)$:  $\bar{S}_e = \text{span}(e_1)$ and $\bar{i}(G) = 1$.
\item For $\mathfrak{g}(1)$ with triple $(a,b,b)$:  $\bar{S}_e = \text{span}(e_1,e_2-e_3)$ and $\bar{i}(G) = 2$.
\item For $\mathfrak{g}(K)$ with $K < 1$, triple $(a,a+a\sqrt{1-K},a-a\sqrt{1-K})$:  $\bar{S}_e = \text{span}(e_2-e_3)$ and $\bar{i}(G) = 1$. 
\item For $\mathfrak{g}_I$ with triple $(a,0,0)$:  $\bar{S}_e = \text{span}(e_1)$ and $\bar{i}(G) = 1$. 
\item For the unimodular algebra $\mathfrak{e}(1,1)$ with triple $(0,b,b)$:  $\bar{S}_e = \text{span}(e_1)$ and $\bar{i}(G) = 1$.
\item For the unimodular algebra $\mathfrak{e}(2)$ with triple $(0,b,-b)$:  $\bar{S}_e = \text{span}(e_2-e_3)$ and $\bar{i}(G) = 1$.
\item For the abelian algebra: $\bar{S}_e = \mathfrak{g}$ and $\bar{i}(G) = 3$.
\item For all other cases we have $\bar{S}_e = \{0\}$ and $\bar{i}(G) = 0$. 
\end{itemize}

\section{Killing fields vanishing at $e$}

In this section we show how the Ricci tensor, $\Ric$, of $G$ evaluated at $e = (0,0,0) $ can influence the contribution of a Killing vector field vanishing at $e$ to the index of symmetry $i(G)$.

First note that for any one parameter group of isometries $\psi_t:G\rightarrow G$ taking $e$ to $e$, each $\psi_t$ will preserve the Ricci tensor, $\psi_t^*\Ric = \Ric$, and, in particular, $\psi_t^*\Ric(e) = \Ric(e)$.  
Define matrices for $\Ric(e)$ and $\psi_t^*\Ric(e)$ for the standard basis $\mathbf{e}= (e_1,e_2,e_3)$:  $R_{ij} = \Ric(e)(e_i,e_j)$ and $(\psi_t^*R)_{ij} = \psi_t^*\Ric(e)(e_i,e_j)$.  (So we must have $\psi_t^*R  = R$.)  Then since $\psi_t(e)=e$, we find using equation (\ref{E:basischange})
\begin{align}
(\psi_t^*R)_{ij} &= \psi_t^*\Ric(e)(e_i,e_j) = \Ric(e)((\psi_t)_*e_i,(\psi_t)_*e_j )   \notag \\
       &= \Ric(e)(a(t)_{ki}e_k,a(t)_{lj}e_l) \notag \\
       &= a(t)_{ki}a(t)_{lj}\Ric(e)(e_k,e_l) \notag \\ 
       &=a(t)_{ki}a(t)_{lj}R_{kl} \notag
\end{align}  
where $a(t)_{ki} = \partial (x_k\circ \psi_t ) /\partial x_i$. In matrix terms we have 
\[R =\psi_t^*R = a(t)^T R\,\, a(t).\]
Recall that the matrices $a(t)$ are orthogonal, so $a(t)^T =a(t)^{-1}$.  Then
\[R = a(t)^{-1}R\,\,a(t)=a(t)R\,\,a(t)^{-1}.\]

For the standard basis $e = \{e_i\}$ for a metric Lie algebra $\mathfrak{g}$ corresponding to the triple $(a,b,c)$, the Ricci matrix can be calculated as 
\begin{equation} \label{Ricci}
R = \left[ {\begin{array}{*{20}c}
	{ - 2a^2  - \frac{{(b + c)^2 }}
		{2}} & 0 & 0  \\
	0 & { - 2a^2  - \frac{{b^2  - c^2 }}
		{2}} & { - a(b + c)}  \\
	0 & { - a(b + c)} & { - 2a^2  - \frac{{c^2  - b^2 }}
		{2}}  \\
	\end{array} } \right].
\end{equation} 

One can diagonalize this matrix by a ``rotation'' of the $e_2e_3$-plane in $\mathfrak{g}$ through an angle $\theta$ where $\tan(2\theta) = 2a/(b-c)$:  Define a rotation matrix 
$r(\theta) = \left[ \begin{array}{*{20}c}
1 & 0 & 0 \\
0 & \cos(\theta) & -\sin(\theta) \\
0 & \sin(\theta) & \cos(\theta)
\end{array} \right]$.  Notice that $r(\theta)^{-1} = r(-\theta) = r(\theta)^T$, so $r(\theta)$ is orthogonal.  Define a smooth change of coordinates with $y_j = r(\theta)_{i,j}x_i$.  Then the new orthonormal basis $d = \{d_j\}$ for $\mathfrak{g}$ where $d_j = \partial/\partial y_j \lvert_e$ is obtained by rotating the $e_2e_3$ plane through the angle $\theta$:  $d_j = r(\theta)_{i,j}e_i$.  Notice that $e_k = r(-\theta)_{j,k}d_j$.  Then when $\tan(2\theta) = 2a/(b-c)$ one finds that the Ricci matrix relative to this new basis $d$ has the diagonal form
\begin{equation} 
R_d=  \left[ {\begin{array}{*{20}c}
	R_{11} & 0 & 0  \\
	0 & R_{22} & 0  \\
	0 & 0 & R_{33} \\
	\end{array} } \right]. \notag
\end{equation} 
where
\begin{gather} \label{E:R_ii}
R_{11} = - 2a^2  - \frac{(b + c)^2 }{2} \notag \\
R_{22} = - 2a^2  - \frac{(b + c)}{2}\sqrt {4a^2  + (b - c)^2 }  \\
R_{33} = - 2a^2  + \frac{(b + c)}{2}\sqrt {4a^2  + (b - c)^2 }\,\,.  \notag
\end{gather}

The matrices $R$ and $R_d$ are related by 
\[R_d = r(\theta)^TR\,\,r(\theta) = r(\theta)^{-1}R\,\,r(\theta)\]
where $r(\theta)$ is the orthogonal rotation matrix relating the standard basis $e_i$ to the ``diagonalizing'' basis $d_i$.  Then a calculation shows that $R=a(t)R\,\, a(t)^{-1}$ implies $R_d = a(d,t)^{-1}R_d\,\,a(d,t)$ where $a(d,t) = r(\theta)^{-1}a(t)^{-1}\,r(\theta)$. 
Then using proposition (\ref{P:Xvanate}) we have
\begin{proposition} \label{P:a(d,t)}
Let $\psi_t$ be a one parameter subgroup of $\text{Isom}_e(G,g)$ with corresponding Killing vector field $X=X^iL_i$ vanishing at $e$.  Then there exists a smoothly varying family of orthogonal matrices $a(d,t)$ with $a(d,0)= I$, the identity, such that

(1)  Each $a(d,t)$ commutes with the diagonalized Ricci matrix $R_d$, that is, $R_d = a(d,t)^{-1}R_d\,\,a(d,t)$.

(2)  $a(t)= r(\theta) a(d,t)^{-1}\,r(\theta)^{-1}$.

(3)  $P(X) = (a(t)'\lvert_{t=0})  = \left( r(\theta) (a(d,t)^{-1})'\lvert_{t=0}\,\,r(\theta)^{-1} \right)$.
\end{proposition}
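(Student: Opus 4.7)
The plan is to \emph{define} $a(d,t)$ by the formula forced on us by part (2), namely $a(d,t) := r(\theta)^{-1}\,a(t)^{-1}\,r(\theta)$, and then verify that this $a(d,t)$ has all the advertised properties. Since $r(\theta)$ is a fixed orthogonal matrix and $a(t)$ is already known to be a smoothly varying family of orthogonal matrices with $a(0) = I$, this definition immediately makes $a(d,t)$ a smoothly varying family of orthogonal matrices with $a(d,0) = r(\theta)^{-1}\,I\,r(\theta) = I$.

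Next I would check (2) and (1) in that order. Part (2) is nothing but solving the defining relation $a(d,t) = r(\theta)^{-1}a(t)^{-1}r(\theta)$ for $a(t)$: taking inverses and conjugating by $r(\theta)$ gives $a(t) = r(\theta)\,a(d,t)^{-1}\,r(\theta)^{-1}$. For (1), I would start from the identity $R = a(t)\,R\,a(t)^{-1}$ already established in this section (it is the matrix form of the fact that each $\psi_t$ preserves $\mathrm{Ric}$ at $e$, combined with orthogonality of $a(t)$). Conjugating both sides of this identity by $r(\theta)^{-1}$ and inserting $r(\theta)r(\theta)^{-1} = I$ on either side of $R$ gives
\begin{equation*}
R_d \;=\; r(\theta)^{-1}R\,r(\theta) \;=\; \bigl(r(\theta)^{-1}a(t)\,r(\theta)\bigr)\,R_d\,\bigl(r(\theta)^{-1}a(t)^{-1}r(\theta)\bigr) \;=\; a(d,t)^{-1}\,R_d\,a(d,t),
\end{equation*}
which is exactly the commutation claim in (1).

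Finally, for (3) I would invoke Proposition \ref{P:Xvanate} directly: it already asserts $P(X) = a(t)'\lvert_{t=0}$. Differentiating the formula from (2) in $t$ (the factors $r(\theta)$ are constant in $t$) gives $a(t)' = r(\theta)\,(a(d,t)^{-1})'\,r(\theta)^{-1}$, and evaluating at $t=0$ yields the stated form of $P(X)$.

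I do not anticipate a genuine obstacle here: the proposition is essentially a bookkeeping statement that repackages the already-proven invariance $R = a(t)Ra(t)^{-1}$ in the basis $\{d_i\}$ that diagonalizes $R$. The only mild care needed is to keep the placement of inverses and transposes straight, and to recall that $r(\theta)$ is orthogonal so that $r(\theta)^T = r(\theta)^{-1}$, which is what lets the conjugation by $r(\theta)$ preserve orthogonality of $a(d,t)$.
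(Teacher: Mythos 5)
Your proposal is correct and follows essentially the same route as the paper, which (in the text immediately preceding the proposition) defines $a(d,t) = r(\theta)^{-1}a(t)^{-1}r(\theta)$, deduces $R_d = a(d,t)^{-1}R_d\,a(d,t)$ from the already-established identity $R = a(t)R\,a(t)^{-1}$, and obtains (3) from Proposition \ref{P:Xvanate}. Your write-up merely makes explicit the routine checks (orthogonality, smoothness, $a(d,0)=I$) that the paper leaves implicit.
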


As we will see, the condition that $a(d,t)$ commutes with the diagonal matrix $R_d$ places restrictions on $a(d,t)$ and therefor on $a(t)$ and $P(X)$.  

\section{Index of symmetry for generic $G$}

Define a metric Lie algebra $\mathfrak{g}$ (and the corresponding simply connected Lie group $G$) to be \textit{generic} if the diagonal entries $R_{11},R_{22}, R_{33}$ for the diagonalized Ricci operator $R_d$ are all distinct. Our first main result is the following:
\begin{proposition} \label{P:generic}
For generic $G$, $S_e = \bar{S}_e$ and $i(G) = \bar{i}(G)$.	
\end{proposition}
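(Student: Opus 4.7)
The plan is to reduce the statement to Proposition~\ref{P:5.1} and its enhancement to the level of sets: since $\bar{S}_e \subseteq S_e$ always, it is enough to show the reverse inclusion, and for this it is enough to show that every Killing field $X$ vanishing at $e$ satisfies $P(X)=0$ (i.e., is parallel at $e$). Indeed, if $Y$ is a Killing field parallel at $e$ and we write $Y = R + X$ with $R$ right-invariant and $X$ vanishing at $e$, then $Y|_e = R|_e$, and once $P(X)=0$ is known, $P(R) = P(Y) - P(X) = 0$, so $Y|_e \in \bar{S}_e$.

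The engine for showing $P(X)=0$ will be Proposition~\ref{P:a(d,t)}: write $X$ as the Killing field associated to a one-parameter subgroup $\psi_t$ of $\text{Isom}_e(G,g)$, so that
\[
P(X) = r(\theta)\,(a(d,t)^{-1})'|_{t=0}\,r(\theta)^{-1},
\]
where $a(d,t)$ is a smoothly varying family of orthogonal matrices with $a(d,0)=I$ commuting with the diagonalized Ricci matrix $R_d$. It therefore suffices to prove that $a(d,t) \equiv I$ for all $t$.

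The key observation is the following linear-algebra lemma which I will record in the proof: an orthogonal matrix $M$ that commutes with a diagonal matrix $R_d = \operatorname{diag}(R_{11},R_{22},R_{33})$ whose diagonal entries are pairwise distinct must itself be diagonal. Indeed, $(MR_d - R_dM)_{ij} = (R_{jj}-R_{ii})M_{ij}$, so genericity forces $M_{ij}=0$ for $i\ne j$, and the orthogonality condition then forces each diagonal entry to be $\pm 1$. Thus the set of orthogonal matrices commuting with $R_d$ is the finite discrete group $\{\operatorname{diag}(\pm 1,\pm 1,\pm 1)\}$. Since $t \mapsto a(d,t)$ is continuous on the connected set $\mathbb{R}$ and starts at $I$, it must remain identically equal to $I$. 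Differentiating at $t=0$ gives $(a(d,t)^{-1})'|_{t=0} = 0$, hence $P(X)=0$ as desired.

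The only nontrivial step is the discreteness lemma above, but this is elementary. Everything else is bookkeeping: apply the decomposition $Y = R + X$ to an arbitrary element of $S_e$, use linearity of $P$ to transfer the parallelism from $Y$ and $X$ to $R$, and then conclude $S_e \subseteq \bar{S}_e$; taking dimensions gives $i(G) = \bar{i}(G)$. No nongeneric case analysis is required here, since that is precisely what the hypothesis of distinct $R_{ii}$ is designed to preclude; those cases will be handled separately in Sections~9--11.
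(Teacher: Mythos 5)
Your proposal is correct and follows essentially the same route as the paper: both invoke Proposition~\ref{P:a(d,t)}, use genericity to force the orthogonal matrices $a(d,t)$ commuting with $R_d$ to be diagonal with entries $\pm 1$, and then use continuity from $a(d,0)=I$ to conclude $a(d,t)\equiv I$, hence $P(X)=0$ for every Killing field vanishing at $e$. The only cosmetic difference is that you rederive the decomposition argument of Proposition~\ref{P:5.1} at the level of the sets $S_e$ and $\bar{S}_e$ and spell out the commutant computation, whereas the paper wraps the final step by citing Corollary~\ref{C:5.1}.
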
 
\begin{proof}
For a generic $G$, take any one parameter group $\psi_t$ of isometries fixing $e$ with corresponding Killing vector field $X_e$ vanishing at $e$.   As in proposition (\ref{P:a(d,t)}) we have a smoothly varying family $a(d,t)$ of orthogonal matrices which commute with the diagonal matrix $R_d$.  Since $G$ is generic, the diagonal entries of $R_d$ are all distinct.  Then one finds that commuting with $R_d$ requires each $a(d,t)$ to be diagonal.  But then since $a(d,t)$ is orthogonal, each diagonal entry must be $\pm1$.  Finally, since $a(d,0)$ is the identity and $a(d,t)$ varies continuously with $t$, all diagonal entries must be $1$, that is, $a(d,t) = I=a(d,t)^{-1}$ for all $t$.  But then $a(t)= r(\theta) a(d,t)^{-1}\,r(\theta)^{-1}= I$ is independent of $t$.  Then $\bar{i}(G) = i(G)$ by corollary (\ref{C:5.1}).
\end{proof}

Combined with the results of section 6, the proposition gives $i(G)$ for all but the nongeneric $G$.  We  now find the triples $(a,b,c)$ which give nongeneric $G$, that is, for which the Ricci matrix $R_d$ does not have distinct diagonal entries.

First notice that if $b+c = 0$, then $R_{11} = R_{22} = R_{33} = -2a^2$.

If $b+c \neq 0$, then $R_{11} = R_{22}$ if and only if $a^2 = bc$.  

If $b+c \neq 0$, then $R_{11} \neq R_{33}$.

If $b+c \neq 0$, then $R_{22} = R_{33}$ if and only if $a=0$ and $c=b$.

We can then catalog the nongeneric (and nonabelian) metric Lie algebras as follows:

\begin{itemize}
\item $\mathfrak{g}(1)$ for any triple $(a,b,c)$ (with $a > 0, c = a^2/b$). 
\item $\mathfrak{g}(K)$ for $K<0$ and any triple $(a,b,-b)$ with $a>0, b = a \sqrt{-K}$.
\item $\mathfrak{g}_I$ for any triple $(a,0,0)$ (with $a>0$).
\item The unimodular algebra $\mathfrak{e}$(2) with any triple $(0,b,-b)$ ($b>0$).
\item The unimodular algebra $\mathfrak{e}(1,1)$ with any triple $(0,b,b)$ ($b>0$).
\item The Heisenberg algebra with any triple $(0,b,0)$ ($b>0$). 
\end{itemize}
We will compute $i(G)$ for these non-generic $\mathfrak{g}$ case by case.

\section{The case $\mathfrak{g}(1)$.}

Consider the metric Lie algebras $\mathfrak{g}(1)$ corresponding to a matrix
$A = \left[ 
   {\begin{array}{*{20}c}
   	   a & c  \\
   	   b & a  
	\end{array} }
       \right]$ 
where $a > 0$ and $a^2 = bc$.  In this section we will verify the following result:

\begin{proposition}
	Let $G$ be a simply-connected Lie group with left-invariant metric and let the corresponding metric Lie algebra be $\mathfrak{g}(1)$ with triple $(a,b,c)$, $a>0$ and $a^2=bc$.  Then:
	(1)  If $b=c$, $i(G)=3$.
	(2)  If $b\neq c$, $i(G) = 1$. 
\end{proposition}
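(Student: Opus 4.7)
For case (1) I would first observe that $a^2 = bc$ and $b = c$ together with $a, b > 0$ force $a = b = c$. Passing to the orthogonal basis $\tilde{e}_1 = e_1$, $\tilde{e}_2 = (e_2+e_3)/\sqrt{2}$, $\tilde{e}_3 = (e_2-e_3)/\sqrt{2}$ yields brackets $[\tilde{e}_1,\tilde{e}_2] = \sqrt{2}\,a\,\tilde{e}_2$ and $[\tilde{e}_1,\tilde{e}_3] = 0 = [\tilde{e}_2,\tilde{e}_3]$. So the metric Lie algebra $(\mathfrak{g},Q)$ decomposes as an orthogonal direct sum of the 2-dimensional nonabelian solvable metric Lie algebra $\langle\tilde{e}_1,\tilde{e}_2\rangle$ (a rescaling of hyperbolic 2-space) and the 1-dimensional abelian summand $\langle\tilde{e}_3\rangle$. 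Thus the simply-connected $G$ is Riemannian isometric to a product $H^2\times\mathbb{R}$, which is a Riemannian symmetric space. Since in a symmetric space every tangent vector at a point is the value of a globally parallel Killing field, $S_e = \mathfrak{g}$ and $i(G) = 3$.

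For case (2), the section~6 analysis of $\bar{S}_e$ requires either $b = c$ or $2a = b+c$ for a nonzero contribution; combined with $bc = a^2$ each of these forces $b = c$, which fails by hypothesis, so $\bar{i}(G) = 0$. To compute $i(G)$, I would proceed as in Proposition~\ref{P:generic}, but now account for the nongeneric Ricci structure. Since $bc = a^2$ and $b \neq c$ give $(b+c)/2 > \sqrt{bc} = a$ by strict AM--GM, the diagonalized Ricci matrix has $R_{11} = R_{22} \neq R_{33}$, and Proposition~\ref{P:a(d,t)} constrains any Killing field $X$ vanishing at $e$ to satisfy
\[
P(X) \;=\; s\, r(\theta)\!\begin{pmatrix}0 & 1 & 0 \\ -1 & 0 & 0 \\ 0 & 0 & 0\end{pmatrix}\!r(\theta)^{-1} \;=\; s\begin{pmatrix}0 & \cos\theta & \sin\theta \\ -\cos\theta & 0 & 0 \\ -\sin\theta & 0 & 0\end{pmatrix}
\]
for some $s \in \mathbb{R}$. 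The main obstacle here is to show that such an $X$ with $s \neq 0$ actually arises from a genuine one-parameter subgroup of $\text{Isom}_e(G,g)$: I would either construct the corresponding rotational isometry explicitly in the global coordinates of section~4, or invoke Nomizu's criterion and verify that the infinitesimal rotation in the 2-dimensional Ricci eigenspace preserves the full curvature tensor of $G$ at $e$, not merely the Ricci tensor.

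Granted such an $X$, for $Y = \alpha_j R_j + X$ to satisfy $P(Y) = 0$ I would combine the explicit formulas~\eqref{R(k,i)} with the form of $P(X)$ above. The $(2,3)$ entry forces $\alpha_1(b-c)/2 = 0$, so $\alpha_1 = 0$. The $(1,2)$ and $(1,3)$ entries then yield the linear system
\[
\begin{pmatrix}a & (b+c)/2 \\ (b+c)/2 & a\end{pmatrix}\begin{pmatrix}\alpha_2 \\ \alpha_3\end{pmatrix} \;=\; -s\begin{pmatrix}\cos\theta \\ \sin\theta\end{pmatrix},
\]
whose determinant $a^2 - ((b+c)/2)^2$ is strictly negative. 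Hence for each $s$ there is a unique $(\alpha_2,\alpha_3)$, and $Y|_e = \alpha_2 e_2 + \alpha_3 e_3$ is nonzero precisely when $s \neq 0$. So $S_e$ has dimension exactly one and $i(G) = 1$.
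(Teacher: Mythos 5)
Your case (1) is correct but takes a genuinely different route from the paper. With $a=b=c$ your orthonormal change of basis does split $(\mathfrak{g},Q)$ into the orthogonal direct sum of the nonabelian two-dimensional solvable algebra and a central line (though the bracket comes out as $[\tilde{e}_1,\tilde{e}_2]=2a\,\tilde{e}_2$, not $\sqrt{2}\,a\,\tilde{e}_2$), so $G$ is isometric to $H^2(-4a^2)\times\mathbb{R}$, a symmetric space, and $i(G)=3$ because each tangent vector at $e$ is the value of a transvection Killing field that is parallel \emph{at that point} -- ``globally parallel'' is more than is true or needed. The paper instead exhibits three explicit combinations $Y=\alpha_jR_j+\beta X$ with $P(Y)=0$; your argument is shorter and more conceptual, at the price of invoking the symmetric-space characterization of maximal index of symmetry.

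In case (2) there is a genuine gap, and it sits exactly where you flag the ``main obstacle.'' The Ricci argument (as in the paper's Proposition \ref{P:a(d,t)}) only bounds $S_e$ from above: it shows any Killing field vanishing at $e$ has $P(X)=s\,r(\theta)Jr(\theta)^{-1}$ for some $s$, but if no such field exists with $s\neq 0$ then, since $\bar{i}(G)=0$ here, you would get $i(G)=0$, not $1$. So the lower bound $i(G)\geq 1$ is precisely the assertion that $\mathrm{Isom}_e(G,g)$ contains a nontrivial one-parameter group, and neither of your two proposed strategies is actually carried out. Moreover the second one is not a valid criterion as stated: a skew-symmetric endomorphism at $e$ that preserves the curvature tensor need not integrate to an isometry; one must check invariance of the covariant derivatives $\nabla^kR$ as well (Nomizu/Singer), so verifying ``the full curvature tensor, not merely the Ricci tensor'' would still not close the argument. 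The paper fills this hole constructively: it writes down the Killing field $X$ explicitly in section 9 (verified in section 12) with
\[
P(X)=\begin{pmatrix}0 & b & a\\ -b & 0 & 0\\ -a & 0 & 0\end{pmatrix}\neq 0 ,
\]
and some such construction (or an honest integrability argument) is indispensable. Granted that field, your concluding linear algebra -- $\alpha_1=0$, a unique $(\alpha_2,\alpha_3)$ for each $s$ because the determinant $a^2-((b+c)/2)^2$ is nonzero, hence $\dim S_e=1$ -- agrees with the paper's.
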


\begin{proof}
In section 12 we sketch the proof that $X = X^iL_i$ is a Killing vector field when 
\[X^1 = bx_2 + ax_3\]
\[X^2 = \frac{1}{8b}\left[ e^{-2ax_1}\left\lbrace 4a(bx_2 + ax_3)^2 + (a^2 + b^2)/a \right\rbrace  +e^{2ax_1}(a^2 - 3b^2)/a + 2(b^2 - a^2)/a  \right] \]
\[X^3 = \frac{1}{8a}\left[ e^{-2ax_1}\left\lbrace 4a(bx_2 + ax_3)^2 + (a^2 + b^2)/a \right\rbrace  +e^{2ax_1}(b^2 - 3a^2)/a + 2(a^2 - b^2)/a      \right]\, .\]

One easily checks that this vector field $X$ vanishes at $e=(0,0,0)$.  Then one finds (see section 12)
\[P(X) = D(X)^T\mid_e =
\left[ \begin{array}{*{20}c}
0 & b & a \\
-b & 0 & 0 \\
-a & 0 & 0
\end{array} \right] 
. \]

Using the values found above in (\ref{R(k,i)}) for the right invariant fields $R_j$, we find for a linear combination $Y = \alpha_j R_j + \beta X$:
\[P(Y) = \alpha_j P(R_j) + \beta P(X) =\]
\[ \left[ \begin{array}{*{20}c}
0  & {\alpha_2 a + \alpha_3 \left( \dfrac{a^2 + b^2}{2b}\right)  + \beta b}  &  {\alpha_2 \left(\dfrac{a^2 + b^2}{2b}\right) + \alpha_3 a + \beta a} \\
{-\alpha_2 a - \alpha_3 \left(\dfrac{a^2 + b^2}{2b}\right)  -\beta b } & 0 & {-\alpha_1\left(\dfrac{b^2 - a^2}{2b}\right)} \\
{-\alpha_2 \left(\dfrac{a^2 + b^2}{2b}\right) - \alpha_3 a -\beta a} & \alpha_1 \left(\dfrac{b^2 - a^2}  {2b}\right) & 0 \\
\end{array} \right]  .\]
Then for $Y$ to be parallel at $e$ (that is, for $P(Y)=0$) we need
\[\alpha_1 \left( \dfrac{b^2-a^2}{2b} \right ) = 0\]
\[\alpha_2 a + \alpha_3 \left( \dfrac{a^2 + b^2}{2b}   \right) = -\beta b \]
\[\alpha_2 \left( \dfrac{a^2+b^2}{2b}\right) + \alpha_3 a  = -\beta a   \,\,. \]

When $a = b$ there is a solution with $\alpha_1 = 1$ and all other coefficients $0$.  Then $Y = R_1$ and $Y\lvert_e = e_1 \in S_e$.  But also if $a = b$ the last two equations both become $\alpha_2 b + \alpha_3 b = -\beta b $.  This gives two independent solutions:  $\beta = 0,\,\,\alpha_2 = 1,\,\,\alpha_3 = -1 $, for which $Y = R_2 - R_3$ and $Y\lvert_e = e_2 -e_3 \in S_e$ and $\beta = 1,\,\,\alpha_2 = 0,\,\,\alpha_3=-1$ for which $Y = -R_3 + X$ and $Y\arrowvert_e = -e_3 \in S_e$.  Then evidently $S_e$ is all of $\mathfrak{g}$ and $i(G) = 3$.

Now consider the case $a \neq b$.  We must then have $\alpha_1 = 0$.  The two by two coefficient determinant for the last pair of equations is nonzero, so there will be a unique solution for $\alpha_2, \alpha_3$ corresponding to any choice of $\beta$.  (Notice that for $\beta = 0$ the solution is $\alpha_2 = \alpha_3 = 0$, so there is no solution where $Y$ is right invariant.  That is, $\bar{S}_e = 0$, $\bar{i}(G)= 0$.)  However, for any nonzero $\beta$, say $\beta = 1$, there will be a unique solution for $\alpha_2, \alpha_3$. (In fact, for $\beta = 1$, the solution is $\alpha_2 = \left[-ab +a\left(\frac{a^2+b^2}{2b}   \right) \right]/D$ and $\alpha_3 = \left[-a^2 +b\left(\frac{a^2+b^2}{2b}   \right) \right]/D $ where $D$ is the determinant $D = a^2 -\left(\dfrac{a^2+b^2}{2b}  \right)^2$ .)  So there is a solution (unique up to nonzero scalar multiple) $Y = \alpha_2 R_2 + \alpha_3 R_3 + X$ with $Y\lvert_e = \alpha_2 e_2 + \alpha_3 e_3 \in S_e$.

We claim that up to scalar multiple this $Y\arrowvert_e$ is the only element of $S_e$, so that $i(G) = 1$ for this case.  In fact, take any Killing field $\bar{X}$ vanishing at $e$ and consider the corresponding one parameter family of isometries $\psi_t$.  For the appropriate coordinates, the Ricci operator is diagonalized as 
\begin{equation} 
R_d=  \left[ {\begin{array}{*{20}c}
	R_{11} & 0 & 0  \\
	0 & R_{22} & 0  \\
	0 & 0 & R_{33} \\
	\end{array} } \right].\notag
\end{equation} 
where
\[R_{11} = R_{22} = - 4a^2  - \frac{(b - c)^2 }{2}\]
\[R_{33} = \frac{(b - c)^2}{2}. \]
As seen in section 8, the group $\psi_t$ gives rise to a one parameter group of orthogonal matrices $a(d,t)$ which commute with $R_d$.  To commute with $R_d$, evidently each orthogonal $a(d,t)$ must leave $d_3$ fixed and perform a rotation in the $d_1-d_2$ plane.  That is,
$a(d,t) = \left[\begin{array}{*{20}c}
\cos(\lambda t) & -\sin(\lambda t) & 0 \\
\sin(\lambda t)  & \cos(\lambda t) & 0 \\
0  &  0  &  1
\end{array}\right]$.
Then $a(d,t)^{-1} = \left[\begin{array}{*{20}c}
\cos(\lambda t) & \sin(\lambda t) & 0 \\
-\sin(\lambda t)  & \cos(\lambda t) & 0 \\
0  &  0  &  1
\end{array}\right]$
and $\left( a(d,t)^{-1} \right)'\mid_{t=0} = \left[\begin{array}{*{20}c}
0 & \lambda  & 0 \\
-\lambda  & 0 & 0 \\
0  &  0  &  0
\end{array}\right].$
Then by proposition (\ref{P:a(d,t)}), $P(\bar{X})$ is completely determined (up to a scalar factor $\lambda$) independent of the choice of the field $\bar{X}$. In particular, $P(\bar{X})$ must equal $P(X)$ up to a scalar factor. Now take any $x = \bar{Y}\arrowvert_e \in S_e$ where $\bar{Y} = R + \bar{X}$ is a Killing field parallel at $e$ with $R$ right invariant and $\bar{X}$ vanishing at $e$. Then $0 =P(\bar{Y}) = P(R) + P(\bar{X}) = P(R)  + \kappa P(X)$
for some scalar $\kappa$.  Then $Z = R + \kappa X$ is parallel at $e$, so $Z$ must be a scalar multiple of the field $Y$ found above, say $Z=\gamma Y$. Then $Z\arrowvert_e = R\arrowvert_e = \bar{Y}\arrowvert_e =x$, but also $Z\arrowvert_e = \gamma Y\arrowvert_e = \gamma (\alpha_2e_2+\alpha_3e_3)$. So every element $x \in S_e$ is a scalar multiple of $\alpha_2 e_2 + \alpha_3 e_3$.  Then $i(G) = 1$ for the cases $a \neq b$ as claimed. 
\end{proof}

\section{The cases $a>0$, $b+c = 0$.}

In this section we consider the nongeneric metric Lie algebras $\mathfrak{g}(K)$, $K<1$, with $b+c = 0$ and the special case $\mathfrak{g}_I$.  We will derive the following result:
\begin{proposition}
	Let $G$ be a simply-connected Lie group with left-invariant metric and let the corresponding metric Lie algebra be either $\mathfrak{g}(K)$ with $K<1$ and $b+c = 0$ or $\mathfrak{g}_I$.  Then $i(G)=3$.
\end{proposition}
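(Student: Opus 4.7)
The plan is to observe that the hypothesis $b+c=0$ with $a>0$ forces the left-invariant metric on $G$ to have constant negative sectional curvature $-a^2$, so that $G$ is isometric to hyperbolic $3$-space $\mathbb{H}^3$, a Riemannian symmetric space with $i(G) = \dim G = 3$.

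First I would substitute $b+c=0$ into the Ricci formula (\ref{Ricci}); this covers both $\mathfrak{g}_I$ (with $b=c=0$) and $\mathfrak{g}(K)$, $K<0$, with $c=-b$, $b>0$. The off-diagonal entry $-a(b+c)$ vanishes; $R_{11} = -2a^2 - (b+c)^2/2 = -2a^2$; $R_{22} = -2a^2 - (b^2-c^2)/2 = -2a^2$ because $b^2=c^2$; and similarly $R_{33}=-2a^2$. Hence $R = -2a^2 I$ at $e$, and by left-invariance $\Ric = -2a^2 g$ everywhere on $G$. In dimension $3$ the Weyl tensor vanishes identically, so any Einstein $3$-manifold has constant sectional curvature; here the constant is $-a^2$. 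A simply-connected, geodesically complete Riemannian $3$-manifold of constant curvature $-a^2$ is isometric to $\mathbb{H}^3(-a^2)$, which is a Riemannian symmetric space, so every tangent vector at every point is the value at that point of a Killing field parallel there. Applied at $e$ this gives $S_e = \mathfrak{g}$ and $i(G) = 3$.

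A purely computational alternative, closer in spirit to section 9, would exhibit three explicit Killing fields $X_1, X_2, X_3$ vanishing at $e$ and use them to build enough Killing fields parallel at $e$. Since $R_d = -2a^2 I$ commutes with every orthogonal matrix, proposition (\ref{P:a(d,t)}) imposes no constraint on $a(d,t)$, and the isotropy $\text{Isom}_e(G,g)$ is expected to be all of $O(3)$. One would check that $\{P(X_1), P(X_2), P(X_3)\}$ spans the three-dimensional space of antisymmetric $3\times 3$ matrices; because the $P(R_j)$ of (\ref{R(k,i)}) are also antisymmetric, for each $j$ one could solve $P(R_j + \alpha_{ji}X_i) = 0$ to produce a Killing field parallel at $e$ with value $e_j$, giving $S_e = \mathfrak{g}$ directly. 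This reconciles with the subindex calculations of section 6, which gave $\bar{i}(G) = 1$ for $\mathfrak{g}_I$ and $\bar{i}(G) = 0$ for $\mathfrak{g}(K)$ with $c=-b$, $b>0$: the missing dimensions of $S_e$ all come from Killing fields vanishing at $e$.

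The main obstacle of the computational route is producing and verifying the $X_i$ explicitly. They correspond to the three hyperbolic rotations fixing $e \in \mathbb{H}^3$, none of which arise from left translations in $G$, so unlike the single Killing field written down in section 9 they are not naturally suggested by the group structure and must either be transported from an explicit isometry with hyperbolic space or guessed and then checked directly against the Killing equation (\ref{E:Kill}). The symmetric-space argument sidesteps this work entirely, and in both cases of the proposition yields $i(G) = 3$ in a single stroke.
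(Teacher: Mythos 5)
Your argument is correct, but it takes a genuinely different route from the paper. The paper makes the same identification with hyperbolic space: it notes $R=R_d=-2a^2 I$, asserts constant sectional curvature $-a^2$, and writes down an explicit isometry of $G$ onto the Poincar\'e half-space model of $H^3(a^2)$; but instead of invoking the symmetric-space property it then pulls back three concrete one-parameter subgroups of $PSL(2,\mathbb{C})$ to get explicit Killing fields $X_1,X_2,X_3$ (only $X_3$ vanishes at $e$; $X_1,X_2$ do not), computes $P(X_i)$ with the $D,\Gamma$ machinery of section 5, and exhibits $Y_1=X_1+R_2/a^2$, $Y_2=X_2-R_3/a^2$, $Y_3=X_3-2R_1$ with $P(Y_i)=0$ and values spanning $\mathfrak{g}$, hence $i(G)=3$. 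Your ``alternative'' computational sketch is thus essentially the paper's proof (modulo the detail that you insist the $X_i$ vanish at $e$, which is not what the paper does but would also work since the isotropy at $e$ is $SO(3)$ and the relevant $P(R_j)$ are antisymmetric), while your primary argument replaces all of that by two standard facts: an Einstein $3$-manifold has constant curvature (Weyl $\equiv 0$ in dimension $3$), and a Riemannian symmetric space has index of symmetry equal to its dimension because the transvection Killing fields are parallel at the base point. That shortcut is legitimate and considerably cleaner, but to be airtight you should make explicit (or cite) exactly those two ingredients, plus the completeness of $G$ (a homogeneous, in particular left-invariant, metric is complete) which Killing--Hopf requires; the paper's longer computation buys self-containedness within its own $P$-matrix formalism and produces the parallel Killing fields explicitly. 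Your bookkeeping of the cases ($\mathfrak{g}_I$ with $b=c=0$, and $b+c=0$, $b>0$ forcing $K<0$) and the reconciliation with $\bar{i}(G)=1$ resp.\ $0$ from section 6 are both correct.
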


The Ricci matrix for this case is already diagonalized for the standard coordinates with $d_i = e_i$:
 \begin{equation} 
 R = R_d=  \left[ {\begin{array}{*{20}c}
 	R_{11} & 0 & 0  \\
 	0 & R_{22} & 0  \\
 	0 & 0 & R_{33} \\
 	\end{array} } \right].  \notag
 \end{equation} 
 where
 \[R_{11} = R_{22} = R_{33}= - 2a^2 .\]
 All sectional curvatures are negative and equal to $-a^2$, so (by the Killing-Hopf theorem, see e.g. Wolf \cite{Wolf}) $G$ is isometric to a hyperbolic three-space $H^3 = H^3(a^2)$ (with a metric having constant negative curvature $-a^2$).  So it is not surprising that $G$ has a maximal amount of symmetry, $i(G)=3$.  
 
 A standard Poincare model (see  e.g. Wielenberg \cite{Wiel}) identifies $H^3(a^2)$ with the subspace $\{(y_1,y_2,y_3): y_1 > 0\}$  of $\mathbb{R}^3$ with the metric $(dy_1^2 + dy_2^2 + dy_3^2)/(a^2y_1^2)$.  Meanwhile the metric for $G$ with the standard coordinates is $dx_1^2 + (dx_2^2 + dx_3^2)/e^{2ax_1}$.  Then one can check that the map $G\rightarrow H^3(a^2)$ given by 
\[y_1 = e^{ax_1}/a ,\,\, y_2 = x_2,\,\,y_3=x_3\]
is an isometry.  

It is known (see Wielenberg \cite{Wiel}) that the isometry group of $H^3$ (and therefor also of $G$) can be identified with the Lie group $PSL(2,\mathbb{C})$ of all two by two complex matrices with determinant 1.  So any one parameter subgroup of $PSL(2,\mathbb{C})$ should give a Killing vector field for $G$.

For the one parameter subgroup $\left[
\begin{array}{*{20}c}
1  &  0  \\
t  &  1 
\end{array}
    \right]$ one can show that the corresponding Killing field on $G$ is $X_1 = X_1^i L_i$ where 
\[ X_1^1 = \dfrac{-2x_2}{a} \]
\[ X_1^2 = e^{-ax_1}\left[\cos(bx_1)(x_3^2 - x_2^2)  -\sin(bx_1)(2x_2x_3) \right] + \dfrac{e^{ax_1}\cos(bx_1)}{a^2}   \]
\[X_1^3 = e^{-ax_1}\left[-\sin(bx_1)(x_3^2 - x_2^2) -\cos(bx_1)(2x_2x_3) \right] - \dfrac{e^{ax_1}\sin(bx_1)}{a^2}      \]
In section 12 we sketch a proof that $X_1$ is in fact a Killing field.

One can easily see that $X_1\arrowvert_e = e_2/a^2$.  In section 12 we compute 
\[P(X_1) = \left[ \begin{array}{*{20}c}
 0  &  -1/a   &  0 \\
 1/a  &  0  &  0  \\
 0  &  0  &  0
\end{array} \right] . \] 

Then let $Y_1 = X_1 + R_2/a^2$.  Using (\ref{R(k,i)}) and $b+c=0$, we find $P(Y_1) = 0$.  So $Y_1$ is parallel at $e$ and $Y_1|_e = 2e_2/a^2 \in S_e$.

Similarly, for the one parameter subgroup $\left[
\begin{array}{*{20}c}
1  &  0  \\
it  &  1 
\end{array}
\right]$ the corresponding Killing field on $G$ is $X_2 = X_2^i L_i$ where 
\[ X_2^1 = \dfrac{2x_3}{a} \]
\[ X_2^2 = e^{-ax_1}\left[\sin(bx_1)(x_3^2 - x_2^2)  +\cos(bx_1)(2x_2x_3) \right] - \dfrac{e^{ax_1}\sin(bx_1)}{a^2}   \]
\[X_2^3 = e^{-ax_1}\left[\cos(bx_1)(x_3^2 - x_2^2) -\sin(bx_1)(2x_2x_3) \right] - \dfrac{e^{ax_1}\cos(bx_1)}{a^2}      \]

One finds $X_2\arrowvert_e = -e_3/a^2$ and that 
\[P(X_2) = \left[ \begin{array}{*{20}c}
0  &  0   &  1/a \\
0  &  0  &  0  \\
-1/a  &  0  &  0
\end{array} \right] . \] 

Then if $Y_2 = X_2 - R_3/a^2$ we find $P(Y_2) = 0$.  So $Y_2$ is parallel at $e$ and $Y_2|_e =-2e_3/a^2 \in S_e$.

Finally, for the one parameter subgroup $\left[
\begin{array}{*{20}c}
e^{it}  &  0  \\
0  &  e^{-it} 
\end{array}
\right]$ the corresponding Killing field on $G$ is $X_3 = X_3^i L_i$ where 
\[ X_3^1 =  0 \]
\[ X_3^2 = e^{-ax_1}\left[-2x_3\cos(bx_1) +2x_2\sin(bx_1) \right] \]
\[X_3^3 = e^{-ax_1}\left[2x_3\sin(bx_1) +2x_2\cos(bx_1) \right] \]

One finds $X_3\arrowvert_e = 0$ and that 
\[P(X_3) = \left[ \begin{array}{*{20}c}
0  &  0   &  0 \\
0  &  0  &  -2  \\
0  &  2  &  0
\end{array} \right] . \] 

Then if $Y_3 = X_3 - 2R_1$ we find $P(Y_3) = 0$.  So $Y_3$ is parallel at $e$ and $Y_3|_e =-2e_1 \in S_e$.

Then since the $Y_i|e$ for $i=1,2,3$ are in $S_e$ and span all of $\mathfrak{g}$, we must have $S_e = \mathfrak{g} $ and $i(G) = 3$ as claimed.

\section{The unimodular nongeneric cases.}

In this section we determine $i(G)$ for the three unimodular nongeneric cases.  The results are
\begin{proposition}
Let $G$ be a simply-connected Lie group with left-invariant metric and corresponding metric Lie algebra $\mathfrak{g}$.  Then

(1) if $\mathfrak{g} = \mathfrak{e}(1,1)$ with triple $a = 0$ and $b=c$, then $i(G) = 1$.

(2) if $\mathfrak{g}$ is the Heisenberg algebra with triple $a=c=0$ and $b>0$, then $i(G) = 1$.

(3) if $\mathfrak{g}= \mathfrak{e}(2)$ with triple $a=0$ and $b+c=0$, then $i(G) = 3$.	
\end{proposition}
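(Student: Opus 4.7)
The three cases have quite different Ricci structures, so I would treat them separately.

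For part (3) with triple $(0,b,-b)$, formula (\ref{E:R_ii}) gives $R_{11}=R_{22}=R_{33}=0$, so the Ricci tensor of $G$ vanishes identically. Because the Weyl tensor vanishes in dimension three, Ricci-flatness forces all sectional curvatures to be zero, and the simply-connected $G$ is therefore isometric to Euclidean $\mathbb{R}^{3}$. The constant vector fields on $\mathbb{R}^{3}$ are parallel Killing fields that span every tangent space, so $S_e=\mathfrak{g}$ and $i(G)=3$.

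For part (1) with triple $(0,b,b)$, formula (\ref{Ricci}) already delivers the diagonal $R=R_d=\text{diag}(-2b^{2},0,0)$, so $r(\theta)=I$ and by Proposition \ref{P:a(d,t)} every family $a(d,t)$ coming from a one-parameter subgroup of $\text{Isom}_e(G,g)$ commutes with $R_d$. Smoothness and orthogonality then force $a(d,t)$ to fix $e_1$ and act as an $SO(2)$ rotation on $\text{span}(e_2,e_3)$, so every admissible $P(X)$ is skew-symmetric with nonzero entries only in positions $(2,3),(3,2)$. Specializing (\ref{R(k,i)}) to $a=0$, $b=c$ gives $P(R_1)=0$, a $P(R_2)$ whose only nonzero entries sit in positions $(1,3),(3,1)$, and a $P(R_3)$ whose only nonzero entries sit in positions $(1,2),(2,1)$. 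The equation $P(\alpha_jR_j+X)=0$ therefore decouples into disjoint slots, forcing $\alpha_2=\alpha_3=0$ and killing the rotation parameter, so $Y=\alpha_1R_1$ and $Y|_e=\alpha_1e_1$; combined with the subindex computation of section 6, this yields $S_e=\text{span}(e_1)$ and $i(G)=1$.

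For part (2) with triple $(0,b,0)$, formula (\ref{Ricci}) gives $R=R_d=\text{diag}(-b^{2}/2,-b^{2}/2,b^{2}/2)$. Now the repeated eigenvalue lies in the $(e_1,e_2)$-block, so commuting with $R_d$ forces $a(d,t)$ to be an $SO(2)$ rotation there, and every admissible $P(X)$ is supported in positions $(1,2),(2,1)$. Evaluating (\ref{R(k,i)}) at $a=c=0$ shows that $P(R_3)$ is nonzero at exactly those positions while $P(R_1)$ and $P(R_2)$ populate $(2,3),(3,2)$ and $(1,3),(3,1)$ respectively, so $P(\alpha_jR_j)+P(X)=0$ forces $\alpha_1=\alpha_2=0$ and ties the rotation parameter uniquely to $\alpha_3$. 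This gives $Y|_e=\alpha_3e_3$ and hence $S_e\subseteq\text{span}(e_3)$, so $i(G)\le 1$. The main obstacle is to establish equality by producing an actual Killing field $X$ vanishing at $e$ with nonzero $P(X)$ of the permitted form; I would do this either by invoking the well-known fact that the Heisenberg group with its standard left-invariant metric admits a circle of isometries rotating $\text{span}(e_1,e_2)$ and fixing the central direction $e_3$, or by writing $X$ out in coordinates in the manner of the fields $X_1,X_2,X_3$ constructed in section 10 and directly verifying the Killing equation (\ref{E:Kill}).
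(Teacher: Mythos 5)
Your argument is correct, and parts (1) and (2) follow essentially the paper's own route: the paper likewise uses the fact that any isometry fixing $e$ must commute with the (already diagonal) Ricci matrix, reads off that the admissible $P(X)$ is an infinitesimal rotation in the $e_2e_3$-plane (resp.\ the $e_1e_2$-plane), and then solves $P(\alpha_jR_j+\beta X)=0$ using (\ref{R(k,i)}) exactly as you do, getting $S_e=\mathrm{span}(e_1)$ and $S_e=\mathrm{span}(e_3)$. The one place you stop short in (2) is the existence of the rotational Killing field vanishing at $e$: the paper settles this by exhibiting $X=x_2L_1-x_1L_2+\tfrac{b}{2}(x_1^2+x_2^2)L_3$ and verifying the Killing equation in its section 12, whereas you invoke the standard circle of isometric automorphisms of the Heisenberg group rotating $\mathrm{span}(e_1,e_2)$; that fact is indeed standard (the rotation is a metric-preserving Lie algebra automorphism, hence integrates to an isometry fixing $e$ with $P(X)=a'(0)\neq 0$), so this is a legitimate shortcut rather than a gap, though a complete write-up should include one of the two verifications. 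In part (3) you genuinely diverge from the paper: both proofs start from flatness (Ricci-flat plus vanishing Weyl in dimension three, or directly the metric $dx_1^2+dx_2^2+dx_3^2$ in the global coordinates) and the identification of $G$ with Euclidean $\mathbb{R}^3$, but the paper then still computes explicitly, checking $P(R_2)=P(R_3)=0$ and building $R_1+bX$ with a hand-made Killing field $X$, while you simply observe that the translation fields of flat $\mathbb{R}^3$ are globally parallel Killing fields whose values span $T_eG$, so $S_e=\mathfrak{g}$ and $i(G)=3$. Your version is shorter and avoids the explicit field; the paper's version has the virtue of staying within its uniform $P(\cdot)$-matrix formalism and producing the parallel fields concretely. (Implicitly your flatness argument uses homogeneity to pass from the Ricci matrix at $e$ to Ricci-flatness everywhere, and completeness for the Killing--Hopf identification; both are immediate here and worth a sentence.)
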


(1) For $\mathfrak{e}(1,1)$ with triple $a=0,\,b=c$, the Ricci matrix is already diagonalized for the standard coordinates with $d_i = e_i$:
\begin{equation}
R = R_d=  \left[ {\begin{array}{*{20}c}
	R_{11} & 0 & 0  \\
	0 & R_{22} & 0  \\
	0 & 0 & R_{33} \\
	\end{array} } \right]. \notag
\end{equation} 
where
\[R_{11} = -2b^2 \]
\[R_{22} = R_{33}= 0 .\]

As in section 5, any one parameter group of isometries preserving $e$ gives a family of orthogonal transformations $a(t)$ which commute with $R = R_d$.  Then $a(t)$ must leave $e_1$ invariant while rotating the $e_2-e_3$ plane.  So $a(t)'\mid_{t=0} = \left[ \begin{array}{*{20}c}
0  &  0  &  0 \\
0  &  0  &  \lambda \\
0  &  -\lambda  &  0
\end{array} \right]$ for some constant $\lambda$.  Then by proposition (\ref{P:Xvanate}),
\[P(X) = a(t)'\mid_{t=0} = \left[ \begin{array}{*{20}c}
0  &  0  &  0 \\
0  &  0  &  \lambda \\
0  &  -\lambda  &  0
\end{array} \right] \] 
for the Killing field $X$ associated with the one parameter group.  Then  for any Killing field $Y = \alpha_i R_i  + \beta X$ we find (using the formulas (\ref{R(k,i)}) and recalling that $a=0,\,b=c > 0$) that 
\[P(Y) = \left[ \begin{array}{*{20}c}
0  &  \alpha_3 b  &  \alpha_2 b \\
-\alpha_3 b  &  0  &  \beta\lambda \\
-\alpha_2 b  &  - \beta\lambda  &  0
\end{array} \right].\]  So $Y$ can be parallel at $e$ only if $\alpha_2 = \alpha_3 = \beta = 0$. But then $Y = \alpha_1 R_1$ and $Y\lvert_e = \alpha_1 e_1$.  Then $S_e = \bar{S}_e$ and $i(G) = \bar{i}(G) = 1$ as claimed.

(2) For the Heisenberg algebra with triple $a=c=0$, $b>0$,  the Ricci matrix is already diagonalized for the standard coordinates with $d_i = e_i$:
\begin{equation} 
	R = R_d=  \left[ {\begin{array}{*{20}c}
			R_{11} & 0 & 0  \\
			0 & R_{22} & 0  \\
			0 & 0 & R_{33} \\
	\end{array} } \right].  \notag
\end{equation} 
where
\[R_{11} = R_{22} = -b^2/2 \]
\[R_{33}= b^2/2 .\]

As in section 5, any one parameter group of isometries preserving $e$ gives a family of orthogonal transformations $a(t)$ which commute with $R = R_d$.  Then $a(t)$ must leave $e_3$ invariant while rotating the $e_1-e_2$ plane.  So $a(t)'\mid_{t=0} = \left[ \begin{array}{*{20}c}
0  &  \lambda  &  0 \\
-\lambda  &  0  &  0 \\
0  & 0  &  0
\end{array} \right]$ for some constant $\lambda$. Then by proposition (\ref{P:Xvanate}),
\[P(X) = a(t)'\mid_{t=0} = \left[ \begin{array}{*{20}c}
0  &  \lambda  &  0 \\
-\lambda  &  0  &  0 \\
0  &  0  &  0
\end{array} \right] \] 
for the Killing field $X$ associated with the one parameter group. Then  for any Killing field $Y = \alpha_i R_i  + \beta X$ we find (using the formulas (\ref{R(k,i)}) and recalling that $a=c =0,\,b > 0$) that 

\[P(Y) = \left[ \begin{array}{*{20}c}
0  &  \alpha_3 b/2 + \beta\lambda &   \alpha_2 b/2 \\
-\alpha_3 b/2 - \beta\lambda  &  0  &  -\alpha_1 b/2 \\
-\alpha_2 b/2  &  \alpha_1 b/2  &  0
\end{array} \right].\]  So $Y$ can be parallel at $e$ only if $\alpha_1 = \alpha_2 = 0$ and $\alpha_3b/2 +\beta\lambda = 0$.  Then $Y = \alpha_3 R_3 + \beta X$ and $Y\lvert_e = \alpha_3 e_3$.  So $S_e = \text{span}(e_3)$ is one dimensional if there exists any such Killing field $X$ vanishing at $e = (0,0,0)$.  In section 12 we show that the vector field $X = x_2 L_1 -x_1 L_2 + \frac{b}{2} (x_1^2 + x_2^2)L_3$ is in fact a Killing field which vanishes at $e$. So $S_e = \text{span}(e_3)$ and $i(G) = 1$ as claimed.

(3) Finally, for  $\mathfrak{g}= \mathfrak{e}(2)$ with triple $a=0$ and $b+c=0$, the Ricci operator $R$ vanishes completely for the standard coordinates.  In fact all sectional curvatures vanish and $G$ is isometric to Euclidean space.  (By the Killing-Hopf theorem, see \cite{Wolf}. The metric for the standard coordinates is just $dx_1^2 +dx_2^2 +dx_3^2$.) 

The right invariant fields $R_2$ and $R_3$ are already  parallel at $e$ in this case (see equations (\ref{R(k,i)})), so $e_2,e_3 \in S_e$.  For $R_1$ we have $P(R_1) =  \left[ \begin{array}{*{20}c}
0  &  0  &  0 \\
0  &  0  &  -b \\
0  &  b  &  0
\end{array} \right]$.

The vector field 
\[X = \left[x_3\cos(bx_1) -x_2\sin(bx_1)\right] L_2 + \left[-x_3\sin(bx_1) -x_2\cos(bx_1)\right] L_3\] clearly vanishes at $e$ and in section 12 we check that $X$ is in fact a Killing field.  We also check in section 12 that $P(X) =  \left[ \begin{array}{*{20}c}
0  &  0  &  0 \\
0  &  0  &  1 \\
0  &  -1  &  0
\end{array} \right]$.  Then $R_1 + bX$ is a Killing field parallel at $e$  and $(R_1+bX)\lvert_e = e_1 \in S_e$.  So $S_e$ contains al1 $e_i$, $i = 1,2,3$, and $i(G)=3$ as claimed.

\section{Selected proofs.}

In this section we sketch the proofs of some of the results given above.

We begin by checking the formulas (\ref{R(k,i)}) for the matrices $P(R_s)$ corresponding to the right invariant fields $R_s$ of equations  (\ref{E:Right}).
\begin{proof}
We have   
\begin{multline} 
R_1=L_1+e^{-2ax_1}\left[m_{11}(x_1)(ax_2+cx_3)-\frac{c}{b}m_{21}(x_1)(bx_2+ax_3)     \right]L_2  \\
+e^{-2ax_1}\left[-m_{21}(x_1)(ax_2+cx_3)+  m_{11}(x_1)(bx_2+ax_3) \right]L_3 \notag
\end{multline} 
\begin{align} 
R_2 &= e^{-2ax_1}m_{11}(x_1)L_2-e^{-2ax_1}m_{21}(x_1)L_3  \notag \\
R_3 &=-e^{2ax_1}\frac{c}{b} m_{21}(x_1)L_2 + e^{-2ax_1}m_{11}(x_1)L_3. \notag
\end{align}

 We need to compute 
\[P(R_s) = D(R_s)^T\arrowvert_e + \Gamma(R_s) .\]
For $D(R_s)^T\arrowvert_e$, recall that we are evaluating derivatives at $e=(0,0,0)$ and that by (\ref{E:m}) we have $m_{11}(0) = 1$ and $m_{21}(0) = 0$.  Also recall that $M'(0)=A$.  We then find

\begin{gather} 
D(R_1)^T\arrowvert_e =  \left[ {\begin{array}{*{20}c}
	{0} & {0} & {0}	\\
	{0} & {a} & {c} \\
	{0} & {b} & {a}	
	\end{array} } \right]   \notag \\  
D(R_2)^T \arrowvert_e =  \left[ {\begin{array}{*{20}c}
	{0} & {0} & {0}	\\
	{-a} & {0} & {0} \\
	{-b} & {0} & {0}	
	\end{array} } \right]  \notag \\  
D(R_3)^T \arrowvert_e =  \left[ {\begin{array}{*{20}c}
	{0} & {0} & {0}	\\
	{-c} & {0} & {0} \\
    {-a} & {0} & {0}
	\end{array} } \right]  \notag  
\end{gather}

Next notice that $R_s^j\arrowvert_e = R_s^j(e)=1$ if $j=s$ and vanishes if $j \neq s$.  Then the formula (\ref{E:Xgamma}) gives 
\begin{gather} 
\Gamma(R_1)  =  \left[ {\begin{array}{*{20}c}
	{0} & {0} & {0}	\\
	{0} & {-a} & {-(b+c)/2} \\
	{0} & {-(b+c)/2} & {-a}	
	\end{array} } \right]   \notag \\  
\Gamma(R_2)  =  \left[ {\begin{array}{*{20}c}
	{0} & {a} & {(b+c)/2}	\\
	{0} & {0} & {0} \\
	{(b-c)/2} & {0} & {0}	
	\end{array} } \right]  \notag \\  
\Gamma(R_3)  =  \left[ {\begin{array}{*{20}c}
	{0} & {(b+c)/2} & {a}	\\
	{-(b-c)/2} & {0} & {0} \\
	{0} & {0} & {0}	
	\end{array} } \right]  \notag  
\end{gather}

Adding the pairs of matrices we get the formulas (\ref{R(k,i)}) for $R_s(k,i)$.
\end{proof}

We now sketch proofs that certain vector fields are Killing fields as claimed above.  We begin with a field from section 9. 
\begin{proof}
Recall that $a^2 = bc >0$ for this case. Then using the formula for $M(t)$ from section 3 when $c=a^2/b>0$ and $\lambda = a/b$ we find
\[m_{11}(t)=m_{22}(t)=e^{at}\cosh(at)=1/2+e^{2at}/2\] and \[m_{21}(t)=\frac{b}{c}m_{12}(t) =\frac{b}{a}e^{at}\sinh(at)= \frac{b}{a}(-1/2+e^{2at}/2).\]  Then (\ref{E:Left}) gives
\[L_1 = \partial/\partial x_1\] 
\[L_2 = (1/2 + e^{2ax_1}/2)\,\, \partial/\partial x_2 + \frac{b}{a}(-1/2 + e^{2ax_1}/2)\,\, \partial/\partial x_3\]     
\[L_3 = \frac{a}{b}(-1/2 + e^{2ax_1}/2)\,\,\partial/\partial x_2 + (1/2 + e^{2ax_1}/2 ) \,\, \partial/\partial x_3\,\,.\]

Consider the vector field $X=X^iL_i$ where
\[X^1 = bx_2 + ax_3\]
\[X^2 = \frac{1}{8b}\left[ e^{-2ax_1}Y  +e^{2ax_1}(a^2 - 3b^2)/a + 2(b^2 - a^2)/a  \right] \]
\[X^3 = \frac{1}{8a}\left[ e^{-2ax_1}Y  +e^{2ax_1}(b^2 - 3a^2)/a + 2(a^2 - b^2)/a      \right]\, \]
and $Y = 4a(bx_2 + ax_3)^2 + (a^2 + b^2)/a $.

The condition for $X$ to be a Killing field is 
\[L_j(X^k) + L_k(X^j) + X^i(c^j_{ki} + c^k_{ji}) = 0\]
for all $j,k$.
or
\[D(X)+D(X)^T+C(X)=0.\]
Using $D(X)_{jk}=L_jX^k$, a tedious but straightforward computation gives the matrix 
\[D(X) = \left[ {\begin{array}{*{20}c}
	{0} & {-aX^2-cX^3-be^{2ax_1}} & {-bX^2-aX^3-ae^{2ax_1}}	\\
	{be^{2ax_1}} & {aX^1} & {bX^1} \\
	{ae^{2ax_1}} & {cX^1} & {aX^1}	
	\end{array} } \right].\]
By (\ref{E:Xc}) we also have
\[C(X) = \left[ {\begin{array}{*{20}c}
	{0} & {aX^2+cX^3} & {bX^2+aX^3}	\\
	{aX^2+cX^3} & {-2aX^1} & {-(b+c)X^1} \\
	{bX^2+aX^3} & {-(b+c)X^1} & {-2aX^1}	
	\end{array} } \right]\]
It is then easy to check that 
\end{proof}

We next sketch a proof that the field $X_1$ from section 10 is a Killing field and that $P(X_1)  = \left[ \begin{array}{*{20}c}
0  &  -1/a   &  0 \\
1/a  &  0  &  0  \\
0  &  0  &  0
\end{array} \right] . $

\begin{proof}
For this case the matrix is $\left[\begin{array}{*{20}c}
a  &  -b \\
b  &  a
\end{array}  \right]$ and we have (see section 3)
\[M(t) = e^{At} = e^{at} 
\left[ 
{\begin{array}{*{20}c}
	\cos(bt) & -\sin(bt)  \\
	\sin(bt) & \cos(bt)
	\end{array} }
\right]    \]
Then
\[L_1 = \partial/\partial x_1\] 
\[L_2 = e^{ax_1}\cos(bx_1)\,\, \partial/\partial x_2 + e^{ax_1}\sin(bx_1)\,\, \partial/\partial x_3\]     
\[L_3 = -e^{ax_1}\sin(bx_1)\,\,\partial/\partial x_2 + 
 e^{ax_1}\cos(bx_1)\,\, \partial/\partial x_3\,\,.\]

We must show that the field $X_1=X_1^iL_i$ satisfies 
\[D(X)+D(X)^T+C(X)=0.\]
when
\[ X_1^1 = \dfrac{-2x_2}{a} \]
\[ X_1^2 = e^{-ax_1}\left[\cos(bx_1)(x_3^2 - x_2^2)  -\sin(bx_1)(2x_2x_3) \right] + \dfrac{e^{ax_1}\cos(bx_1)}{a^2}   \]
\[X_1^3 = e^{-ax_1}\left[-\sin(bx_1)(x_3^2 - x_2^2) -\cos(bx_1)(2x_2x_3) \right] - \dfrac{e^{ax_1}\sin(bx_1)}{a^2}      \]	
Another tedious but straightforward computation gives the matrix $D(X_1)$:
\[\left[ {\begin{array}{*{20}c}
	{0} & {-aX_1^2+bX_1^3+2e^{ax_1}\cos(bx_1)/a} & {-bX_1^2-aX_1^3-2e^{ax_1}\sin(bx_1)/a}	\\
	{-2e^{ax_1}\cos(bx_1)/a} & {-2x_2} & {-2x_3} \\
	{2e^{ax_1}\sin(bx_1)/a} & {2x_3} & {-2x_2}	
	\end{array} } \right]\]

Then using (\ref{E:Xc}) with $c=-b$ and $X_1^1=(-2x_2)/a$, we also have
\[C(X) =\left[ {\begin{array}{*{20}c}
	{0} & {aX_1^2-bX_1^3} & {bX_1^2+aX_1^3}	\\
	{aX_1^2-bX_1^3} & {4x_2} & {0} \\
	{bX_1^2+aX_1^3} & {0} & {4x_2}	
	\end{array} } \right].\]
It is then easy to check that $D(X)+D(X)^T+C(X)=0$, so $X_1$ is in fact a Killing field.

Next note that evaluating the matrix above at $e=(0,0,0)$ gives 
\[D(X_1) \arrowvert_e = \left[ {\begin{array}{*{20}c}
	{0} & {1/a} & {-b/a^2}	\\
	{-2/a} & {0} & {0} \\
	{0} & {0} & {0}	
	\end{array} } \right].\]

Then using $X_1^1(e) = X_1^3(e) = 0$, $X_1^2(e) = 1/a^2$, and $b+c=0$, the matrix (\ref{E:Xgamma}) gives 
\begin{equation}
\Gamma(X_1) =
\left[ {\begin{array}{*{20}c}
	{0} & {1/a} & {0}	\\
	{0} & {0} & {0} \\
	{b/a^2} & {0} & {0}	
	\end{array} } \right] \notag
\end{equation} 
 
Taking a transpose matrix and adding we find
\[P(X_1)  =D(X_1)^T\arrowvert_e +  \Gamma(X_1) = \left[ \begin{array}{*{20}c}
0  &  -1/a   &  0 \\
1/a  &  0  &  0  \\
0  &  0  &  0
\end{array} \right] \]
 as claimed in section 9.
\end{proof}

Finally we verify that the vector fields in section 11 are in fact Killing fields.

\begin{proof}
For the Heisenberg algebra with matrix $\left[\begin{array}{*{20}c}
0  &  0 \\
b  &  0
\end{array}  \right]$ we have (see section 4)
\[M(t) = e^{At} = 
\left[ 
{\begin{array}{*{20}c}
	1 & 0  \\
	bt & 1
	\end{array} }
\right]    \]
and then
\[L_1 = \partial/\partial x_1\] 
\[L_2 = \partial/\partial x_2 + bx_1 \partial/\partial x_3\]     
\[L_3 = \partial/\partial x_3\,\,.\]
 
The vector field $X = x_2 L_1 -x_1 L_2 + \frac{b}{2} (x_1^2 + x_2^2)L_3$ clearly vanishes at $e$ and we claim $X$ is in fact a Killing field.	
Recalling that $D(X)_{jk}=L_jX^k$, we compute the matrix
\[D(X) = \left[ {\begin{array}{*{20}c}
	{0} & {-1} & {bx_1}	\\
	{1} & {0} & {bx_2} \\
	{0} & {0} & {0}	
	\end{array} } \right].\]	
Then using (\ref{E:Xc}) with $a=c=0$, $X^1=x_2$, and $X^2=-x_1$ we also have
\[C(X) =\left[ {\begin{array}{*{20}c}
	{0} & {0} & {-bx^1}	\\
	{0} & {0} & {-bx_2} \\
	{-bx_1} & {-bx_2} & {0}	
	\end{array} } \right].\]	
It is then easy to check that 
$D(X)+D(X)^T+C(X)=0$, so $X$ is in fact a Killing field.
	
For $\mathfrak{g}=\mathfrak{e}(2)$ and the matrix $\left[\begin{array}{*{20}c}
0  &  -b \\
b  &  0
\end{array}  \right]$ we have (see section 4)
\[M(t) = e^{At} = 
\left[ 
{\begin{array}{*{20}c}
	\cos(bx_1) & -\sin(bx_1)  \\
	\sin(bx_1) & \cos(bx_1)
	\end{array} }
\right]    \]
and then
\[L_1 = \partial/\partial x_1\] 
\[L_2 = \cos(bx_1) \partial/\partial x_2 + \sin(bx_1) \partial/\partial x_3\]     
\[L_3 = -\sin(bx_1)\partial/\partial x_2 + \cos(bx_1)   \partial/\partial x_3\,\,.\]
The field 
\[X = \left[x_3\cos(bx_1) -x_2\sin(bx_1)\right] L_2 + \left[-x_3\sin(bx_1) -x_2\cos(bx_1)\right] L_3\]
clearly vanishes at $e$ and we claim it is a Killing field.
We compute the matrix
\[D(X) = \left[ {\begin{array}{*{20}c}
	{0} & {bX^3} & {-bX^2}	\\
	{0} & {0} & {-1} \\
	{0} & {1} & {0}	
	\end{array} } \right].\]	
Then using (\ref{E:Xc}) with $a=0$ and $b+c=0$ we also have
\[C(X) =\left[ {\begin{array}{*{20}c}
	{0} & {-bX^3} & {bX^2}	\\
	{-bX^3} & {0} & {0} \\
	{bX^2} & {0} & {0}	
	\end{array} } \right].\]	
It is then easy to check that 
$D(X)+D(X)^T+C(X=0 = 0$, so $X$ is in fact a Killing field.

Since $X$ vanishes at $e$, we have $\Gamma(X)=0$.  Evaluating partial derivatives at $e$, we find $D(X)\arrowvert_e =  \left[ \begin{array}{*{20}c}
0  &  0  &  0 \\
0  &  0  &  -1 \\
0  &  1  &  0
\end{array} \right]$.  Then 
\[P(X)=D(X)^T\arrowvert_e + \Gamma(X) = \left[ \begin{array}{*{20}c}
0  &  0  &  0 \\
0  &  0  &  1 \\
0  &  -1  &  0
\end{array} \right]\] as claimed in section 11.	
	
\end{proof}

\end{document}